\def\paragraph{\@startsection{paragraph}{4}%
  \z@\z@{-\fontdimen2\font}%
  {\normalfont\bfseries}}
\theoremstyle{plain}
\newtheorem{theorem}{Theorem}[section]
\newtheorem{prop}[theorem]{Proposition}
\newtheorem{lemma}[theorem]{Lemma}
\theoremstyle{definition}
\newtheorem{definition}[theorem]{Definition}
\newtheorem{assumption}{Assumption}
\theoremstyle{remark}
\newtheorem{remark}[theorem]{Remark}
\numberwithin{equation}{section}
\DeclareMathOperator{\co}{co}
\DeclareMathOperator{\supp}{supp}
\DeclareFontFamily{OT1}{pzc}{}
\DeclareFontShape{OT1}{pzc}{m}{it}{<-> s * [1.40] pzcmi7t}{}
\DeclareMathAlphabet{\mathpzc}{OT1}{pzc}{m}{it}
\newcommand{\Dm}[1]{D_{1#1}}
\newcommand{\Ds}[1]{D_{2#1}}
\newcommand{\R}{\mathbb{R}}
\newcommand{\Rie}{\ensuremath{J}}
\newcommand{\Rc}{\ensuremath{\mathcal{R}}}
\newcommand{\Hc}{\mathcal{H}}
\newcommand{\Hcc}{\ensuremath{\dot{\mathcal{H}}^1}}
\newcommand{\La}{\Lambda}
\newcommand{\Z}{\mathbb{Z}}
\newcommand{\C}{\mathbb{C}}
\newcommand{\cb}{}
\definecolor{fgreen}{HTML}{2ECC40}
\renewcommand{\co}[1]{{ #1}}
\newcommand{\RFgL} 
{
       \Sigma^{\nabla}(h)
}
\newcommand{\RFgR}[1] 
{
       \frac{1}{2} \int_0^1 \dot{h}(t)^2 \, dt - #1|\{t \in [0,1] : h(t) = 0 \}|
}
\newcommand{\RFlL} 
{
       \Sigma^{\Delta}(h)
}
\newcommand{\RFlR}[1] 
{
       \frac{1}{2} \int_0^1 \ddot{h}(t)^2 \, dt - #1|\{t \in [0,1] : h(t) = 0 \}|
}
\newcommand{\<}[2]{\ensuremath{\langle #1,\,#2\rangle}}
\newcommand{\E}{\mathcal{E}}
\newcommand{\B}{\mathcal{B}}
\newcommand{\G}{\mathcal{G}}
\def\subsubsection{\@startsection{subsubsection}{3}%
  \z@{.5\linespacing\@plus.7\linespacing}{-.5em}%
  {\normalfont\bfseries}}
\begin{document}

\title{Analysis of Cell Size Effects in Atomistic Crack Propagation}


\author{Maciej Buze}
\author{Thomas Hudson}
\author{Christoph Ortner}
\address{Mathematics Institute\\
  Zeeman Building\\
  University of Warwick\\
  Coventry\\
  CV4 7AL\\
  United Kingdom}
\curraddr{}
\email[M.~Buze]{m.buze@warwick.ac.uk}
\email[T.~Hudson]{t.hudson.1@warwick.ac.uk}
\email[C.~Ortner]{c.ortner@warwick.ac.uk}

\thanks{MB is supported by EPSRC as part of the MASDOC DTC, Grant No. EP/HO23364/1.}
\thanks{TH is supported by the Leverhulme Trust through Early Career Fellowship ECF-2016-526}
\thanks{CO is  supported by EPSRC Grant EP/R043612/1}

\subjclass[2010]{65L20, 70C20, 74A45, 74G20, 74G40, 74G60, 74G65}

\keywords{crystal lattices, defects, crack propagation, regularity, bifurcation theory, convergence rates.}

\date{\today}

\dedicatory{}

\begin{abstract}
  We consider crack propagation in a crystalline material in terms of bifurcation
  analysis. We provide evidence that the stress intensity factor is a natural
  bifurcation parameter, and that the resulting bifurcation diagram is a
  periodic ``snaking curve''. We then prove qualitative properties of the
  equilibria and convergence rates of finite-cell approximations to the
  ``exact'' bifurcation diagram.
\end{abstract}
\maketitle

\section{Introduction}
A fundamental task of materials modelling is to understand the process of failure, which is often facilitated by crack propagation. Cracks (and other defects) initiate and propagate via atomistic mechanisms, which renders the task of creating accurate and efficient simulations of this phenomenon on a large scale particularly difficult \cite{Bitzek2015}. In addition, many of the simulation techniques in operation today rely on simplifying assumptions which are often phenomenological; for example, it is generally unclear under which conditions continuum models become invalid \cite{GSHY}.

There is thus a need for a robust mathematical theory of crack propagation at an atomistic scale, providing a rigorous grounding for a subsequent study of bottom-up multiscale and coarse-grained models. In \cite{2018-antiplanecrack} we began to lay the foundation of such a theory by formulating the equilibration problem on a lattice in the presence of a crack as a variational problem on an appropriate discrete Sobolev space, and establishing existence, local uniqueness and stability of equilibrium displacements for small loading parameters. Crucially, we also established decay properties of the lattice Green's function in crack geometry, which enabled us to prove qualitatively sharp far-field decay estimates of the atomistic core contribution to the equilibrium fields in order to quantify the ``range'' of atomistic effects. This work relied on and extended the recent rigorously formalised atomistic theory of single localised defects in crystalline structures \cite{EOS2016,HO2014,2017-bcscrew}.

The purpose of the present work is to go beyond the small-loading regime and introduce a key component missing in \cite{2018-antiplanecrack}, demonstrating that crack propagation, facilitated by bond-breaking events, can be described in the framework of \cite{2018-antiplanecrack}. The mathematical tools we exploit to do so are taken from bifurcation theory in Banach spaces \cite{cliffe_spence_tavener_2000}. While this idea has already been explored numerically in \cite{Li2013,Li2014}, a key new conceptual insight is that the \textit{stress intensity factor} (SIF), which acts as a measure of stability in continuum fracture can be interpreted as the ``loading parameter'' on the atomistic crack through the far-field boundary condition allowing us to obtain rigorous results about cell size effects.

More specifically, 
we model the equilibration of an atomistic crack embedded in an infinite homogeneous crystal as a variational problem with the continuum linearised elasticity (CLE) solution as the far-field boundary condition. The SIF enters the model as a scaling parameter multiplying the CLE solution, and so varying this naturally leads to a bifurcation diagram. Moreover, the fact that the CLE crack equilibrium displacement does not belong to the energy space suggests that the bifurcation diagram consists solely of regular points and quadratic fold points, at which the equilibria found transition from being linearly stable to linearly unstable (or vice versa).

This observation and the numerical evidence we obtain together motivate structural assumptions on the bifurcation diagram: we assume (and confirm numerically) that it is a `snaking curve' \cite{TAYLOR201014} with the stability of solutions changing at each bifurcation point. In particular, under our assumptions, a jump from one stable segment to another captures the propagation of the crack through one lattice cell, with the unstable segment that is crossed in that jump being a corresponding saddle point, which represents the energetic barrier which must be overcome for crack propagation to occur at a given value of the SIF. This allows us to capture the phenomenon of `lattice trapping' \cite{Thomson_1971,gumbsch_cannon_2000}, a term which refers to the idea that in discrete models of fracture there can exist a range of values of SIF for  which the crack remains locally stable despite being above or below the critical Griffith stress.

As in \cite{2018-antiplanecrack}, we avoid significant technicalities by restricting the analysis to a two--dimensional square lattice with nearest neighbour pair interaction. The notable difference in the models considered is that in \cite{2018-antiplanecrack}, in order to prove that the variational problem is well-posed, the bonds crossing the crack were explicitly removed from the interaction; by contrast, in the present paper they are included in the interaction range, and instead, the fact that they are effectively broken is encoded in the interatomic potential. This gives rise to a physically realistic periodic bifurcation diagram, for which we subsequently prove regularity results both in terms of its smoothness as a submanifold of an appropriate space, as well as uniform spatial regularity of the equilibria along the corresponding solution path.

Our results for the infinite lattice model naturally lead to an investigation of the numerical approximation of these solutions on a finite-domain, and we use the technical tools established in \cite{2018-antiplanecrack} to establish sharp convergence rates as the domain radius tends to infinity. A notable novelty is that our results apply uniformly to finite segments of the bifurcation diagram; moreover, we establish a superconvergence result for the critical values of the SIF at which fold points occur. Since the unstable segments of {\cb the} bifurcation diagram correspond to index--1 saddle points of the energy, our work in this regard also extends the convergence results of \cite{2018-uniform} for saddle point configurations of point defects and suggests possible future extensions to a full transition state analysis \cite{Eyr,HTB90,Wigner1997,Berglund13,2018-entropy}. \\

\paragraph{Outline:} In Section \ref{main} we provide a detailed motivation for our work, introducing the model for crack propagation in the anti-plane setup, describing the underlying assumptions, and providing a statement of the main results about the model and its numerical approximation. In Section~\ref{mot} we give a brief overview of the continuum mechanics context and describe how it motivates our work, and in Section~\ref{setup} we discuss the discrete kinematics of the atomistic model. Then, in Section~\ref{model} the key assumptions are presented and discussed, and the novel components of the theory, in particular the role of the stress intensity factor, are highlighted. The main results of the paper are also stated. Section \ref{sec:approx} is dedicated to the finite-domain approximation of the problem, with sharp convergence results stated, including the superconvergence result for the critical values of the stress intensity factor. In Section \ref{numerics} we present a numerical setup employed to compute bifurcation paths, enabling us to numerically verify the sharpness of our results with respect to regularity and rate of convergence. Section \ref{conclusion} then provides a discussion about the significance of our results, and the proofs of the main results are given in Section \ref{proofs}.

\clearpage

\section{Main results}\label{main}
\subsection{Motivation}\label{mot}

The principal motivation for our work stems from the following limitation of the continuum elasticity approaches to static crack problems. Consider a domain $\mathcal{D} \subset \R^2$, representing a cross-section of a three-dimensional elastic body, with a crack set $\Gamma_{\mathcal{D}}\subset\mathcal{D}$. Given a material-specific \textit{strain energy density function} $W\,:\,\R^{2\times k} \to \R\cup\{+\infty\}$ (where $k \in \{1,2,3\}$ depending on the loading mode), c.f. \cite{landau1989theory}, one can hope to find a non-trivial equilibrium displacement $u\,:\,\mathcal{D} \to \R^k$ accommodating the presence of a crack by minimising the continuum energy given by
\[
E(u):= \int_{\mathcal{D}\setminus \Gamma_{\mathcal{D}}} W(\nabla u)\,dx,
\]
over a suitable function space. In line with CLE, one can approximate $W$ by its expansion around zero to second order and obtain the associated equilibrium equation
\begin{align}
-{\rm div}\left(\C\,\colon\nabla u\right) &= 0 \quad\text{in } \mathcal{D}\setminus\Gamma_{\mathcal{D}},\label{cle1}\\
\left(\C\,\colon\nabla u\right)\nu &= 0 \quad\text{on }\Gamma_{\mathcal{D}}\label{cle3},
\end{align}
supplied with a suitable boundary condition coupling to the bulk \cite{freund_1990}. Here $\C$ is the \textit{elasticity tensor} with entries $\mathbb{C}^{j\beta}_{i\alpha} := \partial_{i\alpha j\beta} W(0)$.

It is well-known that regardless of the details of the geometry of $\mathcal{D}$ and $\Gamma_{\mathcal{D}}$, near the crack tip, the gradients of solutions to \eqref{cle1}-\eqref{cle3} exhibit a persistent $1/\sqrt{r}$ behaviour, where $r$ is the distance from the crack tip, c.f. \cite{rice1968mathematical}. The singularity at the crack tip implies the failure of CLE to accurately describe a small region around it where atomistic (nonlinear and discrete) effects dominate. This near-tip nonlinear zone is argued to exhibit \textit{autonomy} \cite{freund_1990,Broberg99}, meaning that the state of the system in the vicinity of the singular field is determined uniquely by the value of the SIF, and therefore systems with the same SIF but different geometries will behave similarly within the near-tip nonlinear zone.

In order to better understand the microscopic features of this zone, we may exploit the spatial invariance of elasticity and zoom in on the region near the crack tip by performing a spatial rescaling $R\,u(x/R)$, which leads to a simplified geometry of an infinite domain with a half-infinite straight crack line, as illustrated in Figure \ref{fig:crack_blowup}.
\begin{figure}[!htbp]
  \begin{subfigure}[t]{.7\textwidth}
    \centering
    \includegraphics[width=\linewidth]{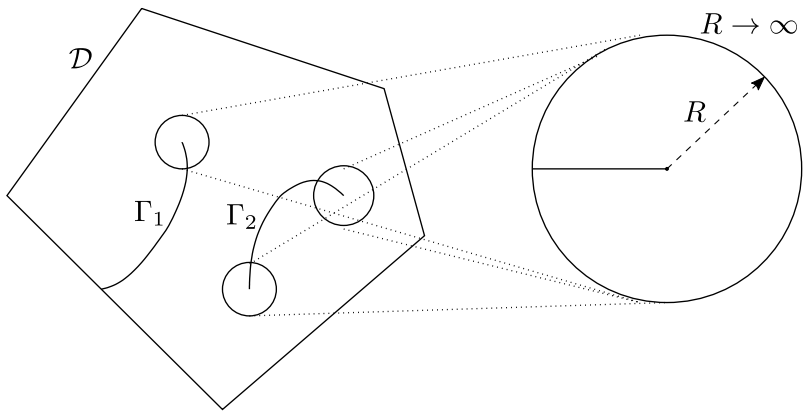}
    \end{subfigure}
  \caption{A schematic illustration of the setup. A domain $\mathcal{D}$ with the crack set $\Gamma_{\mathcal{D}} = \Gamma_1\cup\Gamma_2$. The autonomy of the crack implies we can zoom in on each crack tip to obtain a simplified geometry of a ball of radius $R$. In the limit $R\to \infty$ of the spatial rescaling we obtain a domain $\R^2$ and a half-infinite crack $\Gamma_0$.}
\label{fig:crack_blowup}
\end{figure}

In what follows we focus on Mode III cracks, restricting to anti-plane displacements $u:\R^2 \to \R$. Under natural assumptions on the stored energy density $W$ which result from coupling it with a corresponding interatomic potential, as discussed in \cite{2018-antiplanecrack}, the set of equations \eqref{cle1}-\eqref{cle3} reduces in the simplified geometry to
\begin{align}
-\Delta u &= 0 \quad\text{in } \R^2\setminus\Gamma_0,\label{cle4}\\
\nabla u\cdot\nu &= 0 \quad\text{on }\Gamma_0\label{cle5},
\end{align}
where
\begin{equation}\label{gamma0}
 \Gamma_0 := \{(x_1,0)\,|\,x_1 \leq 0\}.
\end{equation}
This PDE has a canonical solution, as discussed in e.g. \cite{SJ12}, given by
\begin{equation}\label{upred}
\hat{u}_{k}(x) = k\,\sqrt{r}\sin\textstyle{\frac{\theta}{2}},
\end{equation}
with $(r,\theta)$ representing standard cylindrical polar coordinates centred at the crack tip. The scalar parameter $k$ corresponds to the (rescaled) stress intensity factor (SIF) \cite{lawn_1993}.

As we increase the spatial rescaling parameter $R$, we eventually approach the atomic lengthscale at which the hypothesis that the material behaves as a continuum breaks down. We must thus speak of atomic displacements and finite differences rather than differential operators, and consider an atomistic model supplied with the function $\hat{u}_k$ as a far-field boundary condition. As such, in the model we describe below, the function $\hat{u}_k$ will act as a CLE `predictor', representing the behaviour of the material in the far field away from the crack tip.

\subsection{Discrete kinematics}\label{setup}
The atomistic setup is similar to the one introduced in \cite{2018-antiplanecrack}; here, we recall it in detail and highlight new concepts. Let $\La$ denote the shifted two dimensional square lattice defined as
\[ \La := \big\{l - (\tfrac{1}{2},\tfrac{1}{2})\,\big|\,l \in \Z^2\big\}.\]

We consider a crack opening along $\Gamma_0$ defined in \eqref{gamma0}, and distinguish the lines that include lattice points directly above and below $\Gamma_0$. These are defined as
\[
\Gamma_{\pm} := \big\{m \in \La\,\big|\, m_1 < 0\,\text{ and }\,m_2 = \pm \textstyle{\frac{1}{2}} \big\}
\]
and we refer to Figure \ref{fig:lattice_gamma} for a visualisation of the setup.
\begin{figure}[!htbp]
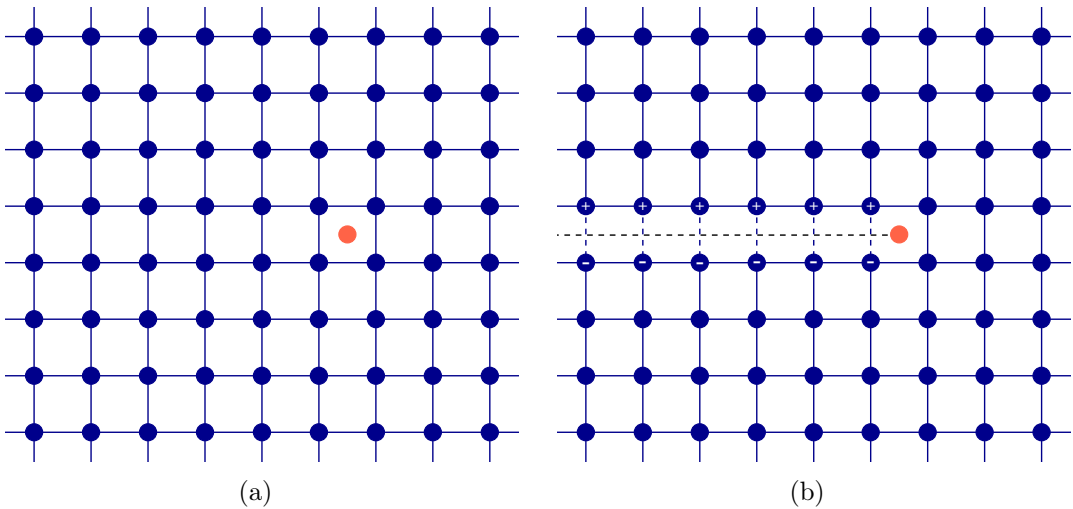

  \begin{subfigure}[t]{.45\textwidth}
    \centering
    \includegraphics[width=\linewidth]{test1.eps}
    \caption{$\,$}\label{fig::fullbonds}
  \end{subfigure}
	\quad
  \begin{subfigure}[t]{.45\textwidth}
    \centering
    \includegraphics[width=\linewidth]{test2inkscape2.eps}
    \caption{$\,$}\label{fig::cutbonds}
  \end{subfigure}
  \caption{The geometry of the problem with and without the bonds across the crack. The (predicted) crack tip depicted by a red dot. In \subref{fig::cutbonds} the crack cut $\Gamma_0$ from \eqref{gamma0} is shown as a dashed black line and the lattice points on $\Gamma_+$ and $\Gamma_-$ are highlighted.}
\label{fig:lattice_gamma}
\end{figure}

For the purposes of our analysis, it is helpful to consider two notions of interaction neighbourhood for lattice points. First, the nearest neighbour (NN) directions of the homogeneous square lattice are given by
\[
\Rc = \left\{\pm e_1, \pm e_2\right\}.
\]
Second, we modify these interaction neighbourhoods by disregarding the directions across the crack, as these bonds are effectively already broken; for any $m \in \La$, we therefore define
\begin{equation}\label{Rcm}
\tilde{\Rc}(m) := \begin{cases} \Rc \quad & \text{for }\, m \not\in (\Gamma_+\cup\Gamma_-), \\
						\Rc \setminus \left\{\mp e_2\right\}\quad & \text{for }\, m \in \Gamma_{\pm}.\end{cases}
\end{equation}
For an anti--plane displacement defined on the lattice $u\,\colon \,\La \to \R$, we define the finite difference operator as $D_{\rho}u(x) := u(x+\rho) - u(x)$ and introduce two notions of the discrete gradient, denoted by $Du(m),\, \tilde{D}u(m)\, \in \R^{\Rc}$ and defined as
\begin{equation}\label{dgrad}
\big(Du(m)\big)_{\rho} {\, \cb :=\, } D_{\rho}u(m)\text{ and } \big(\tilde{D}u(m)\big)_{\rho} := \begin{cases}
D_{\rho}u(m)\quad &\text{if }\rho \in \tilde{\Rc}(m),\\
0\quad&\text{if }\rho \not\in\tilde{\Rc}(m).
\end{cases}
\end{equation}
{\cb We note that since $|\Rc| = 4$, $\R^{\Rc}$ is a $4$-dimensional space indexed by each member of $\Rc$. It therefore follows from \eqref{dgrad} that $Du$ corresponds to homogeneous NN interactions, whereas $\tilde{D}u$ reflects a defective lattice, as when $m\in \Gamma_{\pm}$, the components of $\tilde{D}u(m)$ which correspond to erased lattice directions are always zero.}

The removal of NN bonds and subsequent introduction of the discrete gradient operator $\tilde{D}$ allows us to define the appropriate discrete energy space (discrete Sobolev space) for handling arbitrarily large differences in the far--field displacements across the crack,
\begin{equation}\label{Hcc}
\begin{gathered}
  \Hcc := \left\{u\,\colon\,\La \to \R \;|\; \tilde{D}u \in \ell^2\;\text{and}\; \textstyle{u(\frac{1}{2},\frac{1}{2})} = 0 \right\},\\
  \begin{aligned}
\text{which has associated norm} \qquad & \|u\|_{\Hcc} := \|\tilde{D}u\|_{\ell^2} = \left(\sum_{m\in\La}|\tilde{D}u(m)|^2\right)^{\sfrac{1}{2}}  \\
\text{ and inner product} \qquad & (u,v)_{\Hcc} := \sum_{m \in \La} \tilde{D}u(m) \cdot \tilde{D}v(m).
\end{aligned}
\end{gathered}
\end{equation}
The choice to restrict $\textstyle{u(\frac{1}{2},\frac{1}{2})}=  0$ ensures that only one constant displacement lies in the space, making $\|\cdot\|_{\Hcc}$ a norm.

It is also helpful to introduce the space of compactly supported displacements,
\[
\Hc^{\rm c} := \{ u\,:\,\La \to \R\,|\,\supp(Du)\,\text{ is compact}\}.
\]
{\cb \begin{remark}
To avoid future confusion, we note that compared to \cite{2018-antiplanecrack} the definitions of $D$ and $\tilde{D}$ have been swapped to accommodate the differing nature of the two papers. In \cite{2018-antiplanecrack} the interactions across the crack are always explicitly excluded, leaving little need for this explicit distinction. In the present work the distinction is crucial and we opted for $D$ to denote the usual intuitive notion of the discrete gradient. Furthermore, a similar change in notation occurs for $\Rc$ and $\tilde{\Rc}$. Note, however, that in both papers the definition of $\Hcc$ remains the same. 
\end{remark}}

\subsection{Analysis of the model}\label{model}
We modify the theory developed in \cite{2018-antiplanecrack} for a small-load anti-plane crack to frame it in the context of bifurcation theory. We consider the energy difference functional $\E\,:\,\Hcc \times \R \to \R$ supplied with CLE solution as a far--field boundary condition:
\begin{equation}\label{energy}
\E(u,k) = \sum_{m \in \La} V\big(D\hat{u}_{k}(m) + Du(m)\big) - V\big(D\hat{u}_{k}(m)\big).
\end{equation}
Here $V\,\colon\,\R^{\Rc} \to \R$ is a suitable interatomic site potential and $\hat{u}_{k}\,\colon\,\La\to\R$ is the CLE predictor, introduced in \eqref{upred}. The function $u \in \Hcc$ is a core correction, thus the total displacement is given by $\hat{u}_{k} + u$.


We assume the site potential to be a NN pair-potential of the form
\begin{equation}\label{pair-potential}
V(Du(m)) = \sum_{\rho \in \Rc} \phi\big(D_\rho u(m)\big),
\end{equation}
with $\phi \in C^\alpha(\R)$ for $\alpha \geq 5$. Assuming that the lattice is reflection symmetric in the anti--plane direction, we assume without loss of generality that
\begin{equation*}
  \phi(0) = 0,\quad\phi'(0)=0,\quad\phi''(0) = 1,\quad\text{and}\quad\phi'''(0)=0.
\end{equation*}
The first and third assumptions may be made by subtracting an appropriate constant and rescaling the potential, while the second and fourth follow from the assumption of anti-plane symmetry, as discussed in \cite{2018-antiplanecrack}.

Note that we employ the homogeneous discrete gradient operator $Du$ in the definition of $\E$, while the `crack-aware' gradient $\tilde{D}u$ is used to define the space $\Hcc$. This is helpful in the context of capturing crack propagation, since it enables us to consider displacements with arbitrarily large strains across the crack, but raises the issue that for any $m \in \Gamma_{\pm}$ and $\rho \not\in \tilde{\Rc}(m)$ crossing the crack surface, we have $D_{\rho}\hat{u}(m) \sim |m|^{1/2}$. Thus, in order for such $\E$ to be well-defined on $\Hcc\times \R$, we further assume that the pair-potential satisfies
\begin{equation}\label{V-new-ass}
\text{there exists }\,R_{\phi} >0\,\text{ such that }\, \phi'(r) = 0\quad\forall r\,\text{ with } |r| \geq R_{\phi}.
\end{equation}
Such an assumption is sufficient and simplifies the exposition, but is by no means a necessary condition and can be easily replaced by an appropriate decay property (e.g. exponential or sufficiently high algebraic decay). In particular, under this assumption, we {\cb firstly prove} the following result.

\begin{theorem}\label{Eu-well-def}
The energy difference functional $\E$ expressed in \eqref{energy} is well-defined on $\Hcc\times \R$ and is $\alpha$-times continuously differentiable.
\end{theorem}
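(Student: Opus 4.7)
The plan is to decompose the sum defining $\E(u,k)$ into contributions from bonds that cross the crack line ($m\in\Gamma_\pm$, $\rho=\mp e_2$) and those that do not ($\rho\in\tilde\Rc(m)$), and to analyse each piece separately. The crossing-bond contribution will be controlled by the cutoff assumption \eqref{V-new-ass}, while the non-crossing part is handled via Taylor expansion of $\phi$ combined with discrete summation by parts. The $C^\alpha$ regularity will then follow from the uniformity of all resulting estimates on bounded neighbourhoods of $(u,k)$.

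For the crossing bonds, the explicit form $\hat u_k = k\sqrt{r}\sin(\theta/2)$ gives $|D_\rho\hat u_k(m)|\sim 2|k|\sqrt{|m_1|}$, which grows unboundedly. Using a Hardy-type path-integration bound controlling $|u(m)|$ in terms of $\|u\|_\Hcc$, the increment $|D_\rho u(m)|$ grows much more slowly than $\sqrt{|m|}$. Hence, outside a finite set whose cardinality is uniform on bounded neighbourhoods of $(u,k)$, both $D_\rho\hat u_k(m)$ and $D_\rho\hat u_k(m)+D_\rho u(m)$ lie in the same component of $\R\setminus[-R_\phi,R_\phi]$, on which $\phi$ is constant by \eqref{V-new-ass}. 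Their $\phi$-difference therefore vanishes identically, and the remaining finitely many crossing bonds contribute a quantity that depends $C^\alpha$-smoothly on $(u,k)$ through continuity of the pointwise evaluations $u\mapsto u(m)$ and the linearity of $\hat u_k$ in $k$.

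For the non-crossing bonds, $D_\rho u(m) = \tilde D_\rho u(m) \in\ell^2$ while $|D_\rho\hat u_k(m)| \leq C|k||m|^{-1/2}$. The natural Taylor expansion reads
\[
\phi(a+b) - \phi(a) = \phi'(a)\,b + \tfrac{1}{2}\phi''(a)\,b^2 + R_3(a,b), \qquad |R_3(a,b)|\leq C_\phi |b|^3.
\]
The quadratic and cubic remainders are summable by H\"older-type estimates, using $b\in\ell^2$ together with boundedness of $\phi''$ and $\phi'''$ on bounded sets. The main obstacle is therefore the linear term $\sum_{(m,\rho)\text{ non-cr}}\phi'(D_\rho\hat u_k(m))\,D_\rho u(m)$: a direct Cauchy--Schwarz bound fails because $|D_\rho\hat u_k(m)|^2\sim|m|^{-1}$ is only logarithmically non-summable in two dimensions.

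To surmount this obstacle, I would use $\phi'(0)=\phi'''(0)=0$ to expand $\phi'(a) = a + O(a^3)$ for small $a$. The $O(a^3)\cdot b$ piece is then Cauchy--Schwarz summable because $\sum|D_\rho\hat u_k|^6<\infty$. The residual genuinely linear term $\sum D_\rho\hat u_k(m)\,D_\rho u(m)$ is treated by discrete summation by parts: when summed over all bonds it equals $\sum_m \Delta_h\hat u_k(m)\,u(m)$, with $\Delta_h$ the standard discrete Laplacian. Since $\hat u_k$ is continuously harmonic on $\R^2\setminus\Gamma_0$ and smooth there, Taylor expansion yields $|\Delta_h\hat u_k(m)|=O(|m|^{-7/2})$ away from the crack line, which is summable against polynomial pointwise bounds on $u$; the boundary terms produced by restricting to non-crossing bonds at $\Gamma_\pm$ cancel against the previously-analysed crossing-bond contribution, leaving a finite net sum. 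Finally, $C^\alpha$-regularity in $(u,k)$ follows because all the above bounds hold uniformly on bounded neighbourhoods, each summand is $C^\alpha$ in $(u,k)$ (as $\phi\in C^\alpha$, $u\mapsto u(m)$ is continuous linear, and $\hat u_k$ is linear in $k$), and the term-by-term-differentiated series up to order $\alpha$ converge.
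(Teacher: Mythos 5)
Your overall route is the same as the paper's: split off the crack-crossing bonds, kill them for large $|m|$ by playing the cutoff \eqref{V-new-ass} against the $|m|^{1/2}$ growth of $D_\rho\hat u_k$ across the crack, and handle the remaining (non-crossing) bonds by Taylor expansion plus the fact that $\hat u_k$ is an approximate equilibrium. However, two steps are asserted with justifications that do not actually work. The first is your ``Hardy-type path-integration bound'': integrating $\tilde{D}u$ along a lattice path of length $\sim|m|$ and applying Cauchy--Schwarz only yields $|u(m)|\lesssim |m|^{1/2}\|u\|_{\Hcc}$, which is exactly of the order $\sqrt{|m|}$ you need to beat, so it neither makes the set of surviving crossing bonds finite nor gives the uniformity on bounded neighbourhoods of $(u,k)$ that your $C^\alpha$ argument requires. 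The paper devotes Lemma \ref{loglemma} precisely to this point: chaining $O(\log|m|)$ dyadically growing squares that avoid the slit (with an extra ``jump'' around the tip when $m\in\Gamma_-$) gives $|u(m)|\lesssim\|u\|_{\Hcc}(1+\log|m|)$ on $\Gamma_\pm$. This logarithmic bound, not naive path integration, is the genuinely new ingredient; without it your crossing-bond argument has a hole.

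The second problem is the claim that the boundary terms created by summing by parts only over non-crossing bonds ``cancel against the previously-analysed crossing-bond contribution''. There is nothing to cancel: the crossing-bond summands were never Taylor-expanded (their arguments are large) and they vanish identically for large $|m|$ by the cutoff, so no linear crossing-bond term exists. The correct repair is to observe that the crack-aware one-sided discrete Laplacian $\sum_{\rho\in\tilde{\Rc}(m)}D_\rho\hat u_k(m)$ is itself small on $\Gamma_\pm$: combining the discrete Taylor expansion with harmonicity of $\hat u_k$ and the Neumann condition $\partial_2\hat u_k=0$ on the crack faces gives decay of order $|m|^{-5/2}$ there (and your $|m|^{-7/2}$ off $\Gamma_\pm$), which is summable against the pointwise growth of $u$ along the crack faces. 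With these two repairs your treatment of the non-crossing part is a legitimate, more self-contained alternative: the paper at this point simply cites the first part of Theorem 2.3 of \cite{2018-antiplanecrack}, and its handling of the crossing bonds and of differentiability (standard term-by-term argument, plus extending the derivative forms from compactly supported test functions to all of $\Hcc$) otherwise matches yours.
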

{\cb The proof is given in Section \ref{p-model} and mostly relies on the analogous result in \cite{2018-antiplanecrack}, with the extra work needed to handle the now-included bonds across the crack.}

The inclusion of the stress intensity factor $k$ as a variable in the definition of $\E$ allows us to employ bifurcation analysis to describe the propagation of the crack as a series of bifurcations,  which we view as corresponding to bond-breaking events.

The primary task of our analysis is to characterise the set of critical points of the energy, $S$, defined as
\begin{equation}\label{Sset}
S:= \big\{(u,k) \in \Hcc \times \R\,\big|\, \delta_u \E(u,k) = 0 \in (\Hcc)^*\big\},
\end{equation}
where $\delta_u \E\,:\,\Hcc \times \R \to (\Hcc)^*$ is the partial Fr\'echet derivative given by
\[
\<{\delta_u\E(u,k)}{v} = \sum_{m\in\La}\delta V (D\hat{u}_{k}(m) + Du(m)) \cdot Dv(m).
\]

For future reference, we summarize our notation for linear and multi-linear forms, in particular defining the meaning of $\<{\delta_u\E(u,k)}{v}$. For any $n$-linear form $L$, we write $L[v_1,\dots,v_n]$ to denote its evaluation at $v_1,\dots, v_n$ and if $m < n$, then $L[v_1,\dots,v_m]$ is the $(n-m)$-linear form $(w_1,\dots,w_{n-m}) \mapsto L[v_1,\dots,v_m,w_1,\dots,w_{n-m}]$. For the sake of readability and only when there is no risk of confusion, we often write $\<{L}{v_1}$ for linear forms and $\<{L_1 v_1}{v_2}$ as well as $L_1 v_1 = L_1[v_1]$ for bilinear forms.

It is of particular interest to compute continuous paths contained in $S$, as it allows to characterise the response of the model to variations in SIF. This is often possible if we are able to identify one particular pair, say $(\bar{u}_0,\bar{k}_0) \in S$ and it can be further shown that it is a \textit{regular point}, by which we mean
\begin{equation}\label{regp}
H_0 := \delta_{uu}^2 \E(\bar{u}_0,\bar{k}_0)\,:\, \Hcc \to (\Hcc)^*\,\text{ is an isomorphism.}
\end{equation}
In this case, a standard application of the Implicit Function Theorem \cite{serge} yields existence of a locally unique path of solutions $(\bar{u}_s,\bar{k}_s)$ in the vicinity of $(\bar{u}_0,\bar{k}_0)$ which we will assume to be parametrised with an index $s\in\R$; exactly this strategy was used in \cite{2018-antiplanecrack} to show existence of solutions in a static crack problem with crack bonds removed from the definition of $\E$, for $k$ small enough. We will set
\begin{equation}\label{def-hess}
H_s := \delta_{uu}^2 \E(\bar{u}_s,\bar{k}_s)\,:\, \Hcc \to (\Hcc)^*.
\end{equation}

As we will see in the numerical examples of Section~\ref{numerics}, beyond some critical value of $k$, bifurcations of the following type begin to occur.
\begin{definition}\label{deffp}
A \textit{(simple quadratic) fold point} occurs at $(\bar{u}_b,\bar{k}_b) \in S$ if there exists $\gamma_b \in \Hcc$ such that ${\rm Ker}(H_b) = {\rm span}\{\gamma_b\}$,
\begin{align}
\delta^2_{u k} \E(\bar{u}_b,\bar{k}_b)[\gamma_b, 1] &\neq 0, \label{nondeg2} \\
\label{nondeg1}
\delta^3_{u u u} \E(\bar{u}_b,\bar{k}_b)[\gamma_b, \gamma_b, \gamma_b] &\neq 0,
\end{align}
with formulae for these variations of energy given in \eqref{Euk} and \eqref{Euuu}, respectively.
\end{definition}

\begin{figure}[!htb]
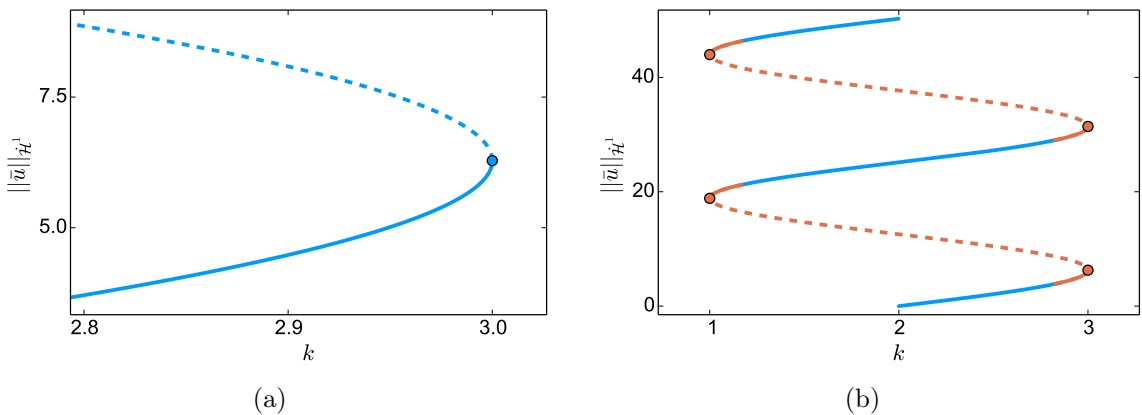

  \begin{subfigure}[t]{.48\textwidth}
    \centering
    \includegraphics[width=\linewidth]{schematic_fold6.pdf}
    \caption{$\,$}\label{fig::fold1}
  \end{subfigure}
  \hfill
  \begin{subfigure}[t]{.48\textwidth}
    \centering
    \includegraphics[width=\linewidth]{schematic_fold8.pdf}
    \caption{$\,$}\label{fig::fold2}
  \end{subfigure}
  \captionsetup{width=\linewidth}
  \caption{\subref{fig::fold1} An illustration of typical behaviour near quadratic fold point (depicted as a blue dot). Solid (respectively dashed) lines represent stable (resp. unstable) solutions. A change in stability at such points as shown in Proposition \ref{prop-eigen} is guaranteed by \eqref{nondeg1}, which ensures that the smallest eigenvalue passes through zero with nonzero `velocity'.\\ \subref{fig::fold2}: A schematic representation of a snaking curve with dots representing bifurcation points. The sets of solutions $\B_{\rm pos}$ and $\B_{\rm pt}$ defined in \eqref{B_st} are represented in blue and red, respectively. Note that $\B_{\rm pt}$ includes the entirety of the unstable segments, as well as bifurcation points and small parts of the stable segments.} \label{fig::schematic_fold}
\end{figure}

A schematic representation of the idea behind Definition \ref{deffp} is shown in Figure \ref{fig::fold1}.
As already discussed in the introduction, the fact that $\hat{u}_k \not\in\Hcc$ is key to \eqref{nondeg2} holding true, and suggests that a full bifurcation diagram is an infinite non-self-intersecting snaking curve \cite{TAYLOR201014}, consisting solely of regular and fold points as shown in Figure \ref{fig::fold2}. Our functional setup is well-suited to considering an arbitrary finite segment of it, so we begin with the following set of assumptions. {\cb We emphasise that all our subsequent results rely on the validity of these assumptions which are natural (see discussion below) but likely difficult to prove rigorously. Moreover, it is not guaranteed that Assumption 1 in particular is generic, but different potentials and loading geometries may indeed give rise to qualitatively different bifurcation diagrams.}
\begin{assumption}\label{ass1}
There exists a bifurcation diagram in the form of an injective continuous path $\B\,:\,[0,1] \to \Hcc\times \R$ given by
\begin{equation}\label{Bset}
\mathcal{B}(s):= (\bar{u}_s,\bar{k}_s),
\end{equation}
where ${\rm Im}(\B) \subset S$ (defined in \eqref{Sset}) is compact and for each $s \in [0,1]$, $\B(s)$ is either a regular point, as in \eqref{regp}, or a fold point, as in Definition \ref{deffp}. We further assume that there are finitely many fold points occuring at  $s \in\{b_1,\dots,b_M\} \subset (0,1)$. In particular, this implies that ${\rm Im}(\B)$ is a non--self--intersecting curve.
\end{assumption}

For future reference, if $f : \mathcal{B} \to X$, where $X$ is a Banach space, is differentiable, then we write $f'_s := \frac{d}{ds} f_s$.

\begin{assumption}\label{ass2-new}
There exists $c > 0$ such that for each $s \in [0,1]$ there exists a subspace $U_s$ of $\Hcc$ of codimension at most 1 for which it holds that
\begin{equation}\label{stab2-new}
\<{H_sv}{v} \geq c\,\|v\|_{\Hcc}^2
\end{equation}
for all $v \in U_s$.
\end{assumption}
The fact that a succession of fold points occurs is assumed to be an inherent feature of the lattice and the potential in place, much as the existence of a solution to a static dislocation problem is assumed in \cite{EOS2016}. Assumption \ref{ass2-new} ensures that each $\B(s) = (\bar{u}_s,\bar{k}_s)$ represents either a bifurcation point, a stable solution or an unstable solution which is an index--1 saddle point. This assumption is motivated by the fact that the anti-plane setup and lattice symmetry naturally binds the crack propagation to the $x_1$-axis, leaving little room for any more involved bifurcating behaviour. Moreover, this is also supported by numerical evidence presented in Section \ref{numerics}{\cb , in particular with Figure \ref{fig::numerics_convergence1} clearly exhibiting the snaking curve structure of the bifurcation diagram. We also refer to Section \ref{par-perbif} for a discussion about the periodicity of the bifurcation diagram which further justifies Assumptions \ref{ass1} and \ref{ass2-new}. Finally we note that in \cite{Li2013} a similar numerical evidence is presented for a vectorial Mode I fracture model posed on a triangular lattice under Lennard-Jones potential.} 

As will be shown in Proposition \ref{prop-eigen}, requiring that \eqref{nondeg1} holds ensures that a change in the stability of the solution occurs at each fold point. This implies that near bifurcation points and on the unstable segments the infimum of the spectrum of $H_s$ is an eigenvalue, which motivates the following decomposition of the parametrisation interval $[0,1]$: since we look at a finite segment of the full bifurcation diagram, we will assume for notational convenience that it starts on a stable segment and  the number of fold points $M$ lying in ${\rm Im}(\B)$ is even and define sets
\begin{equation}\label{S_st_unst}
\mathcal{I}_{{\rm pt}}:= \bigcup_{k=1}^{M/2}I_k \subset [0,1]\quad\text{and}\quad \mathcal{I}_{\rm pos} := [0,1]\setminus \mathcal{I}_{{\rm pt}},
\end{equation}
where $I_k := (b_{2k-1} - \xi, b_{2k} + \xi)$ with $\xi > 0$ small enough. The cases where $M$ is odd or we start on an unstable segment can be handled in an entirely analogous way. We refer to
\begin{equation}\label{B_st}
\mathcal{B}_{\rm pt} := \B(\mathcal{I}_{\rm pt})\quad\text{and}\quad\mathcal{B}_{\rm pos} := \B(\mathcal{I}_{\rm pos})
\end{equation}
as the collection of segments of the bifurcation diagram with $\sigma_p(H_s) \neq \emptyset$ (non--empty point spectrum) and $\sigma (H_s) \subset [c,\infty)$ (positive spectrum, $c$ from Assumption \ref{ass2-new}), respectively. We note that both the unstable segments and neighbourhoods of the bifurcation points belong to $\B_{\rm pt}$, thus the constant $c$ in Assumption \ref{ass2-new} can be chosen to be small enough so that
\begin{equation}\label{Us-stable}
s \in \mathcal{I}_{\rm pos} \implies U_s = \Hcc.
\end{equation}

We now establish some initial results about the model. First, a regularity result.
\begin{prop}[Regularity of the diagram]\label{prop-b}
The set ${\rm Im}(\mathcal{B})\subset \Hcc\times\R$ is a one--dimensional $C^{\alpha-1}$ manifold.
\end{prop}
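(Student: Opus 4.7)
The plan is to apply the Implicit Function Theorem to the map $F(u,k) := \delta_u\E(u,k)$ at each point of ${\rm Im}(\B)$, producing $C^{\alpha-1}$ local parametrizations; these then furnish an atlas for the $C^{\alpha-1}$ 1-manifold structure. Since $\E \in C^\alpha$ by Theorem~\ref{Eu-well-def}, the map $F\,:\,\Hcc\times\R\to(\Hcc)^*$ is of class $C^{\alpha-1}$, which is the regularity IFT will deliver for the local charts.

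At a regular point $\B(s) = (\bar u_s,\bar k_s)$ with $s\in\mathcal{I}_{\rm pos}$, the operator $H_s$ is an isomorphism by \eqref{regp}, so the standard IFT applied to $F(u,k)=0$ yields a $C^{\alpha-1}$ map $k\mapsto u(k)$ in a neighbourhood of $\bar k_s$ whose graph coincides locally with ${\rm Im}(\B)$ (by the uniqueness clause in IFT and injectivity of $\B$). This provides a $C^{\alpha-1}$ chart with $k$ as local coordinate.

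At a fold point $\B(b_i) = (\bar u_b,\bar k_b)$ one cannot solve for $u$ in terms of $k$, so I would use a modified IFT with a parameter $t$ along the kernel direction. Decompose $\Hcc = \spano\{\gamma_b\} \oplus W$ orthogonally (with $W = \gamma_b^\perp$ in the $\Hcc$ inner product), and consider
\[
G\,:\,\R\times W\times\R \to (\Hcc)^*, \quad G(t,w,k) := F(\bar u_b + t\gamma_b + w,\,k).
\]
Its partial derivative in $(w,k)$ at $(0,0,\bar k_b)$ is
\[
T(\delta w,\delta k) := H_b\,\delta w + \delta^2_{uk}\E(\bar u_b,\bar k_b)\cdot\delta k,
\]
and the core step is to verify that $T\,:\,W\times\R\to(\Hcc)^*$ is an isomorphism. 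Via Riesz, identify $H_b$ with a bounded self-adjoint $A_b\,:\,\Hcc\to\Hcc$. Assumption~\ref{ass2-new} provides coercivity on $U_b$ of codimension at most 1; combined with $\ker H_b = \spano\{\gamma_b\}$, a standard argument shows $A_b$ is Fredholm of index 0 with $A_b|_{W}\,:\,W\to W$ an isomorphism, so ${\rm Range}(H_b) = \{\ell\in(\Hcc)^*\,:\,\ell(\gamma_b)=0\}$. The transversality \eqref{nondeg2} then says $\delta^2_{uk}\E(\bar u_b,\bar k_b)$ lies outside ${\rm Range}(H_b)$. Injectivity of $T$ follows by pairing $T(\delta w,\delta k)=0$ with $\gamma_b$ and using symmetry of $H_b$ to get $\delta k\cdot \delta^2_{uk}\E[\gamma_b,1]=0$, hence $\delta k=0$ and $\delta w\in W\cap\ker H_b = \{0\}$; surjectivity is then dimension counting. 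IFT yields $C^{\alpha-1}$ maps $t\mapsto (w(t),k(t))$ with $G(t,w(t),k(t))=0$, defining a local chart $t\mapsto (\bar u_b + t\gamma_b + w(t),\,k(t))$. Differentiating the equation at $t=0$ and pairing with $\gamma_b$ gives $k'(0)=0$ and then $w'(0)\in W\cap\ker H_b=\{0\}$, so the tangent is $(\gamma_b,0)\neq 0$; thus the chart is an immersion, and by uniqueness in IFT and injectivity of $\B$, its image coincides locally with ${\rm Im}(\B)$.

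Patching the two families of $C^{\alpha-1}$ charts via the continuous, injective parametrization $\B$ (with compact image by Assumption~\ref{ass1}) endows ${\rm Im}(\B)$ with the structure of a 1-dimensional $C^{\alpha-1}$ submanifold of $\Hcc\times\R$; transition maps between overlapping charts are $C^{\alpha-1}$ by another application of IFT. The main obstacle is the fold-point analysis, specifically establishing that $H_b$ is Fredholm of index 0 with range given by the annihilator of $\gamma_b$, which crucially uses Assumption~\ref{ass2-new} together with self-adjointness; once that is in hand, verifying the isomorphism property of $T$ and the immersion property is essentially bookkeeping.
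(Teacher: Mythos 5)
Your proposal is correct and follows essentially the same route as the paper: the standard IFT applied to $\delta_u\E(u,k)=0$ away from fold points (solving $k\mapsto u(k)$ using invertibility of $H_s$), a parametrization by the coordinate along the kernel direction $\gamma_{b_i}$ at fold points, and local uniqueness together with injectivity of $\B$ to identify the resulting $C^{\alpha-1}$ charts with ${\rm Im}(\B)$. The only difference is presentational: where the paper forms the bordered system $\tilde F(u,k,t)=\big(\delta_u\E(u,k),(u-\bar u_{b_i},\gamma_{b_i})_{\Hcc}-t\big)$ and invokes the ABCD Lemma (case (ii), using \eqref{nondeg2}) for invertibility of its linearization, you verify the same invertibility directly via the splitting $\Hcc=\spano\{\gamma_{b_i}\}\oplus\gamma_{b_i}^{\perp}$, self-adjointness of $H_{b_i}$, and Assumption~\ref{ass2-new}.
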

{\cb This result will be proven in Section \ref{p-model} and in particular }entails that, without loss of generality, we may make the following assumption concerning the parametrisation $\B$.
\begin{assumption}\label{ass3}
 The function $\B:[0,1]\to\Hcc\times \R$ is a constant speed, $C^{\alpha-1}$ parametrisation of the manifold ${\rm Im}(\B)\subset \Hcc\times \R$.
\end{assumption}

We next {\cb prove} a result concerning the existence of linearly unstable directions and corresponding negative eigenvalues for some sections of the bifurcation diagram.

\begin{prop}[Existence of an eigen-pair]\label{prop-eigen}
Under Assumptions \ref{ass1}, \ref{ass2-new} \& \ref{ass3}, there exist $C^{\alpha-2}$ functions $\gamma\,:\, \mathcal{I}_{{\rm pt}} \to \Hcc$ and $\mu\,:\, \mathcal{I}_{{\rm pt}} \to \R$ such that
\begin{equation}\label{eigen-pair}
	H_s\gamma_s = \mu_s \Rie\gamma_s,
\end{equation}
where $H_s$ was defined in \eqref{def-hess} and $\Rie$ represents the Riesz mapping \cite{rudin}, i.e. an isometric isomorphism between $\Hcc$ and $(\Hcc)^*$, thus we can equivalently say that
\[
\<{H_s\gamma_s}{v} = \mu_s(\gamma_s,v)_{\Hcc}\qquad\text{for all }v \in \Hcc.
\]
Furthermore, for $j=1,\dots,M$, we have $\mu_{b_j} = 0$ with the corresponding eigenvector $\gamma_{b_j}$ introduced in Definition \ref{deffp} and also $\mu_{b_j}' \neq 0$, implying that a change of stability occurs at $s=b_j$.
\end{prop}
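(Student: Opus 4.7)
The plan is to reformulate the generalised eigenvalue problem \eqref{eigen-pair} as a standard self-adjoint eigenvalue problem for a smooth family of operators, and then apply analytic perturbation theory. First I would introduce $T_s := \Rie^{-1} H_s : \Hcc \to \Hcc$. Since $H_s = \delta^2_{uu}\E(\bar{u}_s, \bar{k}_s)$ is symmetric as a bilinear form by Schwarz's theorem (using the $C^\alpha$ regularity from Theorem \ref{Eu-well-def}), $T_s$ is bounded and self-adjoint on $\Hcc$, and the generalised problem $H_s \gamma_s = \mu_s \Rie \gamma_s$ becomes the standard $T_s\gamma_s = \mu_s\gamma_s$. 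Chain-rule composition of $\E \in C^\alpha$ with the constant-speed parametrisation $\B \in C^{\alpha-1}$ (Assumption \ref{ass3}) yields $s \mapsto T_s \in C^{\alpha-2}$ in operator norm.

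The next step is to spectrally isolate $\mu_s$. Assumption \ref{ass2-new} translates to $(T_s v, v)_{\Hcc} \geq c\|v\|^2_{\Hcc}$ for $v \in U_s$, and the min-max principle then implies that the spectral projection of $T_s$ onto $(-\infty, c)$ has rank at most one: otherwise a two-dimensional invariant subspace on which $(T_s v, v) < c\|v\|^2$ would intersect the codimension-one subspace $U_s$ nontrivially, contradicting \eqref{stab2-new}. On each component $I_k \subset \mathcal{I}_{\rm pt}$ the existence of this eigenvalue is guaranteed: at the fold points $b_{2k-1}, b_{2k}$ the value $0$ is an eigenvalue with eigenvector $\gamma_{b_j}$ from Definition \ref{deffp}, and on the unstable segment between them the definition of instability forces a negative point in the spectrum. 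Continuity of $T_s$ together with compactness of $\overline{I_k}$ and the uniform gap to $[c, \infty)$ allow one to fix a contour $\Gamma$ in $\mathbb{C}\setminus[c,\infty)$ encircling the isolated eigenvalue for all $s \in I_k$. The Riesz projection $P_s = \frac{1}{2\pi i}\oint_\Gamma (z - T_s)^{-1}\,dz$ is then a $C^{\alpha-2}$ family of rank-one projections, from which $\gamma_s := P_s e_0 / \|P_s e_0\|_{\Hcc}$ (for a fixed reference $e_0$ in the range of $P_{b_j}$, with sign ambiguity handled by connectedness of $I_k$) and $\mu_s := (T_s\gamma_s, \gamma_s)_{\Hcc}$ inherit $C^{\alpha-2}$ regularity.

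To establish the transversality $\mu'_{b_j} \neq 0$ I would first show that $\bar{k}'_{b_j} = 0$. Differentiating the equilibrium $\delta_u\E(\bar{u}_s, \bar{k}_s) = 0$ along $\B$ gives $H_s \bar{u}'_s + \bar{k}'_s\,\delta^2_{uk}\E(\bar{u}_s, \bar{k}_s)[\cdot, 1] = 0$ in $(\Hcc)^*$; pairing with $\gamma_{b_j}$ at $s = b_j$ and using symmetry of $H_{b_j}$ together with $H_{b_j}\gamma_{b_j} = 0$ kills the first term, so \eqref{nondeg2} forces $\bar{k}'_{b_j} = 0$. Consequently $\bar{u}'_{b_j} \in \mathrm{Ker}(H_{b_j}) = \mathrm{span}\{\gamma_{b_j}\}$, and since the parametrisation has constant nonzero speed, $\bar{u}'_{b_j} = c\,\gamma_{b_j}$ with $c \neq 0$. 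Next, differentiating $H_s\gamma_s = \mu_s\Rie\gamma_s$ at $s = b_j$, pairing with $\gamma_{b_j}$, and exploiting symmetry of $H_{b_j}$ and $\mu_{b_j} = 0$ to cancel the $\gamma'_{b_j}$ contributions yields
\[
\mu'_{b_j}\|\gamma_{b_j}\|^2_{\Hcc} = \langle H'_{b_j}\gamma_{b_j}, \gamma_{b_j}\rangle = \delta^3_{uuu}\E[\bar{u}'_{b_j}, \gamma_{b_j}, \gamma_{b_j}] + \bar{k}'_{b_j}\,\delta^3_{uuk}\E[\gamma_{b_j}, \gamma_{b_j}, 1].
\]
With $\bar{k}'_{b_j} = 0$ and $\bar{u}'_{b_j} = c\gamma_{b_j}$, the right-hand side reduces to $c\,\delta^3_{uuu}\E[\gamma_{b_j}, \gamma_{b_j}, \gamma_{b_j}]$, which is nonzero by \eqref{nondeg1}.

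I expect the main obstacle to be the uniform spectral-gap argument in the second paragraph: verifying that the simple eigenvalue identified at each fold point extends continuously throughout all of $I_k$ and remains uniformly isolated from the rest of the spectrum, so that a single contour $\Gamma$ validates the Riesz projection globally on $I_k$. The other steps are essentially Hellmann--Feynman-type computations once this perturbation-theoretic framework is in place.
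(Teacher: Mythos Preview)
Your approach via self-adjoint spectral perturbation and Riesz projections is a genuine alternative to the paper's, which instead applies the implicit function theorem (Theorem~\ref{thm:brezzi1}) to the extended system $G(s,\gamma,\mu) = (H_s\gamma - \mu \Rie\gamma,\, (c,\gamma)_{\Hcc} - 1)$: first locally around each fold point with $c = \gamma_{b_i}$ (invoking the ABCD Lemma for invertibility of $D_{(\gamma,\mu)}G$), then iteratively along the unstable segment with a uniform lower bound on the continuation step size. Your route delivers the $C^{\alpha-2}$ regularity of $(\gamma_s,\mu_s)$ in one stroke via smoothness of the resolvent, whereas the paper's continuation argument mirrors the numerical pseudo-arclength scheme and dovetails with the Brezzi--Rappaz--Raviart machinery used later for the approximation results. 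Your derivation of $\mu'_{b_j}\neq 0$ is essentially identical to the paper's \eqref{diffs}--\eqref{munonzero}.

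There is, however, a circularity in your second paragraph. You justify existence of the isolated eigenvalue on the open segment $(b_{2k-1}, b_{2k})$ by invoking ``the definition of instability'', but instability there is not part of the hypotheses: it is a \emph{consequence} of $\mu'_{b_j}\neq 0$, which you establish only in the third paragraph. At the point where you draw the contour, all you know is that $\mu_1(s) := \inf\sigma(T_s)$ is continuous, vanishes at the two fold points, and is nonzero in between (regular points have invertible $H_s$); this does not by itself rule out $\mu_1 > 0$ on $(b_{2k-1}, b_{2k})$, in which case $\mu_1$ could reach $c$ and cease to be an isolated eigenvalue, so that no single contour works. The fix is to reorder: first run the Riesz-projection argument \emph{locally} around each fold point with a small contour about $0$ (the gap to $[c,\infty)$ makes this valid on some $(b_j-\delta,b_j+\delta)$), extract the smooth eigenpair there, and perform the Hellmann--Feynman computation of $\mu'_{b_j}\neq 0$ immediately. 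This yields $\mu_s<0$ for $s$ just inside $(b_{2k-1}, b_{2k})$; continuity together with $\mu_s\neq 0$ away from fold points then forces $\mu_s<0$ throughout the segment, at which point a single contour encircling $[-\sup_s\|T_s\|,\, c/2]$ is valid on all of $\overline{I_k}$ (shrinking $\xi$ if needed to cover the stable flanks). This is exactly the logical order the paper follows: part~(a) treats neighbourhoods of fold points and establishes $\mu'_{b_i}\neq 0$ \emph{before} part~(b) continues along the segment using $\mu_t<0$.
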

{\cb This will be proven in Section \ref{p-model}. }

We subsequently establish the following decay and regularity results for the atomistic core corrector, which rely on the precise characterisation of the lattice Green's function for the anti-plane crack geometry developed in \cite{2018-antiplanecrack}.

\begin{theorem}[Decay properties of solutions and eigenvectors]\label{thm:us}
For any $s \in [0,1]$ and $l \in \La$ with $|l|$ large enough it holds that for any $\delta >0$ the atomistic correction $\bar{u}_s$ satisfies
\begin{equation}\label{usdecay}
\big|\tilde{D}\bar{u}_s(l)\big| \leq C|l|^{-3/2+\delta}.
\end{equation}
If $s \in \mathcal{I}_{\rm pt}$, then the eigenvector $\gamma_s \in \Hcc$ from Proposition \ref{prop-eigen} satisfies
\begin{equation}\label{gammasdecay}
\big|\tilde{D}\gamma_s(l)\big| \leq C|l|^{-3/2+\delta}.
\end{equation}
In both cases $C$ is a generic constant independent of $s$.
\end{theorem}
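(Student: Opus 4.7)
The plan is to derive both estimates from the crack-geometry lattice Green's function bounds developed in \cite{2018-antiplanecrack}, combined with a uniform-in-$s$ compactness argument. The overall strategy in both cases is to reformulate the defining equation ($\delta_u\E = 0$ or the eigenvalue equation) as a linear problem with a fixed reference operator on the left-hand side and a controllably decaying right-hand side on the right, then to convolve with the reference Green's function and bootstrap.

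For the equilibrium decay, I would rewrite the condition $\delta_u\E(\bar u_s,\bar k_s) = 0$ as a linearised equation $L\bar u_s = f_s + N_s(\bar u_s)$, where $L$ is the reference discrete Laplacian for the crack geometry, $f_s$ is the consistency residual produced by applying the discrete Euler--Lagrange operator to the CLE predictor $\hat u_{\bar k_s}$ alone, and $N_s$ collects the nonlinear remainder from Taylor-expanding $\delta V$ about $D\hat u_{\bar k_s}$. Away from the crack, $\hat u_{\bar k_s}$ is a smooth continuum harmonic function with $D\hat u_{\bar k_s}(m) \sim |m|^{-1/2}$, so standard consistency estimates for the discrete Laplacian give $|f_s(m)| \lesssim |m|^{-5/2}$ uniformly in $s$. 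The truncation \eqref{V-new-ass} of $\phi'$ is essential here: on the bonds crossing $\Gamma_0$ one has $|D\hat u_{\bar k_s}(m)| \gtrsim |m|^{1/2}$, so $\phi'$ vanishes on those bonds outside a bounded region and they contribute nothing to $f_s$ or $N_s$ in the far field. Convolving $f_s$ with the lattice Green's function from \cite{2018-antiplanecrack}, and absorbing $N_s(\bar u_s)$ via a fixed-point / bootstrap argument analogous to the small-load proof there, then yields the decay $|\tilde D\bar u_s(l)| \lesssim |l|^{-3/2+\delta}$ for arbitrary $\delta>0$.

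For the eigenvector estimate, I would apply the same template to $H_s\gamma_s = \mu_s \Rie\gamma_s$, rewritten as $L\gamma_s = (L - H_s)\gamma_s + \mu_s \Rie\gamma_s$. The perturbation $L - H_s$ is a lattice operator whose coefficients differ from the reference Laplacian only through local differences of $\delta^2 V$ evaluated at $D\hat u_{\bar k_s} + D\bar u_s$; these differences inherit the same far-field decay as $D\bar u_s$ plus the $|m|^{-1/2}$ decay of $D\hat u_{\bar k_s}$ away from the crack (and are $O(1)$ across the crack, supported in a bounded region by \eqref{V-new-ass}). Starting from the \emph{a priori} information $\tilde D\gamma_s \in \ell^2$ (built into $\gamma_s \in \Hcc$) and using the decay from Step~1 for $\tilde D\bar u_s$, a Green's function convolution then promotes this to the pointwise rate $|l|^{-3/2+\delta}$, bootstrapping in the same way as for $\bar u_s$; the term $\mu_s \Rie\gamma_s$ is harmless since $\Rie\gamma_s$ is controlled by $\tilde D\gamma_s$ and $\mu_s$ is uniformly bounded along the compact segment.

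Uniformity of the constant $C$ in $s$ would follow from Assumption \ref{ass1} and Proposition \ref{prop-b}: the map $s\mapsto(\bar u_s,\bar k_s)$ is continuous into the compact set ${\rm Im}(\B)$, and the constants entering each step depend continuously on this pair. Crucially, the argument uses the \emph{reference} Green's function of $L$, not the inverse of $H_s$, so it is entirely insensitive to the degeneracy of $H_s$ at the fold points $s\in\{b_1,\dots,b_M\}$. The main obstacle I anticipate is closing the bootstrap for the nonlinear term $N_s(\bar u_s)$ at exponent exactly $-3/2+\delta$ without losing $\delta$ at each iteration, and verifying that higher-order terms in the Taylor expansion of $\phi$ combined with the slowly-decaying $|m|^{-1/2}$ behaviour of $D\hat u_{\bar k_s}$ away from the crack do not contaminate the target rate; this is the step where a careful accounting of the structure of the discrete convolution, as carried out in \cite{2018-antiplanecrack} for the small-load case, must be reproduced at an arbitrary point on $\B$.
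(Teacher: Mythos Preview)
Your proposal is correct and follows essentially the same route as the paper: represent $D_\tau\bar u_s(l)$ and $D_\tau\gamma_s(l)$ by testing the equilibrium and eigenvalue equations against $v = D_{2\tau}\G(\cdot,l)$ for the crack-geometry Green's function of \cite{2018-antiplanecrack}, split off the finitely many crack-crossing bonds (compactly supported thanks to \eqref{V-new-ass}, then routed around the crack) from the bulk contribution, which is estimated exactly as in \cite[Theorem~2.4]{2018-antiplanecrack}, and bootstrap. The only detail the paper spells out beyond your sketch is that the bootstrap for $\gamma_s$ is done in two concrete steps---Cauchy--Schwarz from $\tilde D\gamma_s\in\ell^2$ first yields $|\tilde D\gamma_s(l)|\lesssim |l|^{-1}$, and feeding this back into the same representation then gives $|l|^{-3/2+\delta}$.
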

{\cb As in the case of Theorem \ref{Eu-well-def}, we note that \eqref{usdecay} can be proven as in \cite{2018-antiplanecrack}, except for an extra dificulty arising from the fact that bonds across the crack are now included. The estimate in \eqref{gammasdecay} follows from a two-step argument that is similar in nature. Both these estimates will be proven in Section \ref{p-model}.}
{ \cb
\begin{remark}
The arbitrarily small $\delta > 0$ in Theorem \ref{thm:us} appears due to a technical limitation of the method employed in \cite{2018-antiplanecrack} to estimate the mixed second discrete derivative of the lattice Green's function in the anti-plane crack geometry. We expect the result to hold for $\delta = 0$ too, but this cannot be achieved with the current bootstrapping argument, which saturates at the known decay rate of the corresponding continuum Green's function. 
\end{remark}}
\subsection{Approximation}\label{sec:approx}
As numerical simulations are naturally restricted to a computational domain of finite size, we now consider and analyse a finite-dimensional scheme that approximates the solution path $\mathcal{B}$ defined in \eqref{Bset} and establish rigorous convergence results.

The starting point is a computational domain $\Omega_R$ with $B_R \cap \La \subset \Omega_R \subset \La$ (where $B_R$ is a ball of radius $R$ centred at the origin) and the boundary condition prescribed as $\hat{u}$ on $\La\setminus \Omega_R$. The approximation to \eqref{Sset} can thus be stated as a Galerkin approximation, that is we seek to characterise
\begin{equation}\label{Ssetapp}
S_R:= \big\{(u^R,k) \in {\cb \Hc^0_R} \times \R\,\big|\, \delta_u \E(u^R,k) = 0 \in (\Hc^0_R)^*\big\},
\end{equation}
where
\[
\mathcal{H}^0_R := \{v\,\colon\,\La\to\R \,|\,v = 0 \text{ in } \La\setminus\Omega_R\}.
\]
We prove the following.
\begin{theorem}\label{approx-thm-1}
Under  Assumptions \ref{ass1}, \ref{ass2-new} \& \ref{ass3}, there exists $R_0 >0$, such that for all $R \geq R_0$, there exists a $C^{\alpha-1}$ approximate bifurcation path $\B_R\,:\,[0,1] \to \Hc_R^0 \times \R$ given by
\[
\mathcal{B}_R(s):= (\bar{u}^R_s,\bar{k}^R_s),
\]
where ${\rm Im}(\B_R) \subset S_R$, such that for any $\beta > 0$
\begin{equation}\label{BR_conv_est}
\|\bar{u}_s^R - \bar{u}_s\|_{\Hcc} +
 	\big|\bar{k}_s^R - \bar{k}_s\big| \lesssim R^{-1/2+\beta}
\end{equation}
and
\begin{equation}\label{BR_norm_conv_est}
\big|\E(\bar{u}_s^R,\bar{k}_s^R) - \E(\bar{u}_s,\bar{k}_s)\big| \lesssim R^{-1+\beta},
\end{equation}
where $\mathcal{B}(s) = (\bar{u}_s,\bar{k}_s)$ as in \eqref{Bset}.
\end{theorem}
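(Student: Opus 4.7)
The strategy is to combine a bordered/extended-system reformulation of the equilibrium problem with a quantitative discrete inverse function theorem, exploiting the decay estimates of Theorem \ref{thm:us} to control the truncation residual. The key conceptual difficulty is that at fold points $H_s$ is singular, so the standard Galerkin-plus-IFT argument does not apply uniformly along the curve.

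To handle fold points uniformly, I would recast the equilibrium condition as $\Phi(u,k,s)=0$ on $\Hcc \times \R \times [0,1]$, where
\[
\Phi(u,k,s) := \bigl(\delta_u \E(u,k),\ (u-\bar{u}_s,\bar{u}_s')_{\Hcc} + (k-\bar{k}_s)\bar{k}_s'\bigr),
\]
with the second component acting as an arclength-style normalisation transverse to the tangent $(\bar{u}_s',\bar{k}_s')$. Evaluated on the exact path, the Fr\'echet derivative with respect to $(u,k)$ is the bordered operator
\[
M_s := \begin{pmatrix} H_s & \delta^2_{uk}\E(\bar{u}_s,\bar{k}_s) \\ (\cdot,\bar{u}_s')_{\Hcc} & \bar{k}_s' \end{pmatrix} : \Hcc \times \R \to (\Hcc)^* \times \R.
\]
For $s \in \mathcal{I}_{\rm pos}$, $M_s$ is invertible by Assumption \ref{ass2-new}; near fold points the rank-one deficiency of $H_s$ is compensated by the bordering, provided the non-degeneracy condition \eqref{nondeg2} holds, and compactness of $[0,1]$ then delivers uniform invertibility.

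I would then define the discrete analogue $\Phi_R$ by restricting the first component to $\Hc_R^0$ and apply a quantitative (Newton--Kantorovich-type) inverse function theorem at a suitable truncation $\pi_R\bar{u}_s \in \Hc_R^0$. Discrete stability transfers from $M_s$ via a density/perturbation argument, since $\Hc_R^0$ is exhaustive in $\Hcc$. Consistency is controlled by Theorem \ref{thm:us}: the tail $\|\tilde{D}(\bar{u}_s-\pi_R\bar{u}_s)\|_{\ell^2}^2$ is bounded by $\int_R^\infty r^{-3+2\delta}\,r\,dr \lesssim R^{-2+2\delta}$, which translates through Lipschitz continuity of $\delta_u\E$ into $\|\Phi_R(\pi_R\bar{u}_s,\bar{k}_s,s)\| \lesssim R^{-1/2+\beta}$. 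The IFT then produces $(\bar{u}_s^R,\bar{k}_s^R)\in S_R$ satisfying \eqref{BR_conv_est}, and the $C^{\alpha-1}$-regularity of $\B_R$ in $s$ is inherited from the parameter-dependent IFT combined with Assumption \ref{ass3}.

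For the energy estimate \eqref{BR_norm_conv_est} I would Taylor-expand $\E(\bar{u}_s^R,\bar{k}_s^R)-\E(\bar{u}_s,\bar{k}_s)$ and exploit the orthogonalities $\langle\delta_u \E(\bar{u}_s,\bar{k}_s),\cdot\rangle=0$ on $\Hcc$ and $\langle\delta_u \E(\bar{u}_s^R,\bar{k}_s^R),\cdot\rangle=0$ on $\Hc_R^0$, together with $\|\bar{u}_s-\pi_R\bar{u}_s\|_{\Hcc}\lesssim R^{-1/2+\beta}$, to reduce the linear $u$-contribution to a product of two $R^{-1/2+\beta}$ factors. The principal obstacle is the residual $k$-contribution $\delta_k\E(\bar{u}_s,\bar{k}_s)\cdot(\bar{k}_s^R-\bar{k}_s)$, which is only $O(R^{-1/2+\beta})$ a priori; I expect this to be eliminated by a duality/Aubin--Nitsche-style argument on the extended system, slaving the $k$-error to the $u$-error and gaining an extra factor of $R^{-1/2+\beta}$. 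Verifying this uniformly across fold-point neighbourhoods is the most technically delicate step.
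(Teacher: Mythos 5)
Your treatment of \eqref{BR_conv_est} is essentially the paper's own argument. The paper also works with an extended map $F(s,(u,k)) = \big(\delta_u\E(u,k),\,(u-\bar{u}_s,\bar{u}_s')_{\Hcc}\big)$ (its normalisation omits your extra term $(k-\bar{k}_s)\bar{k}_s'$, which changes nothing essential), proves uniform invertibility of the bordered operator along the path via the ABCD Lemma (Lemma \ref{abcd}) -- treating separately regular points, unstable non-fold points using the eigenpair of Proposition \ref{prop-eigen}, and fold points using \eqref{nondeg2} -- and then applies the quantitative implicit function theorem (Theorem \ref{thm:brezzi1}) at the truncated point $(T_R\bar{u}_s,\bar{k}_s)$, with the consistency error controlled by \eqref{tranc-ineq} and the decay estimates of Theorem \ref{thm:us}; smoothness of $s\mapsto\B_R(s)$ comes from the same theorem. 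Two small points you gloss over: at a fold the bordering only restores invertibility because $\bar{u}_{b_i}'$ is parallel to $\gamma_{b_i}$ (obtained by differentiating $\delta_u\E(\bar{u}_s,\bar{k}_s)=0$ in $s$ and using $\bar{k}_{b_i}'=0$), so the normalisation functional does not annihilate ${\rm Ker}(H_{b_i})$ -- invoking \eqref{nondeg2} alone is not enough; and your tail computation should give $\int_R^\infty r^{-3+2\delta}\,r\,dr \lesssim R^{-1+2\delta}$, not $R^{-2+2\delta}$, though this still yields the residual bound $R^{-1/2+\beta}$ you state.

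The genuine gap is \eqref{BR_norm_conv_est}. You correctly isolate the obstruction -- after Taylor expansion the $u$-linear term vanishes since $\delta_u\E(\bar{u}_s,\bar{k}_s)=0$ on $\Hcc$, but the term $\delta_k\E(\bar{u}_s,\bar{k}_s)\,(\bar{k}_s^R-\bar{k}_s)$ is a priori only $O(R^{-1/2+\beta})$ -- yet you then merely ``expect'' a duality/Aubin--Nitsche argument to remove it, without constructing the dual problem, stating what quantity is slaved to what, or verifying uniformity across the fold-point neighbourhoods, which you yourself identify as the delicate step. As written, your argument establishes only $|\E(\bar{u}_s^R,\bar{k}_s^R)-\E(\bar{u}_s,\bar{k}_s)|\lesssim R^{-1/2+\beta}$, not the claimed $R^{-1+\beta}$. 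The paper closes this step by arguing exactly as in the proof of \cite[Theorem 2.4]{EOS2016pp}; in your framework you would need to actually carry out the corresponding improved estimate (or cancellation) for the $k$-contribution rather than postulate it.
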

{\cb For the proof of this result, we refer to Section \ref{p-conv}.}

While the estimate in \eqref{BR_conv_est} appears to be almost sharp (our numerical results in Section~\ref{numerics} indicate that this estimate holds with $\beta=0$), more can be said about the approximation of the critical values of the stress intensity factor for which fold points occur.
\begin{theorem}\label{approx-thm-2}
For $R$ suffiently large, the approximate bifurcation path $\mathcal{B}_R$ from Theorem~\ref{approx-thm-1} contains $M$ fold points in the sense of Definition \ref{deffp} occuring at $s \in \{b_1^R,\dots,b_M^R\} \subset (0,1)$, and for each of these we have
\[
\big|\bar{k}^R_{b_j^R} - \bar{k}_{b_j}\big| \lesssim R^{-1+\beta}\quad\text{for any }\beta >0.
\]
\end{theorem}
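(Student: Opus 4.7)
The proof naturally splits into three steps: (i) locating the approximate fold $b_j^R$ near each exact $b_j$, (ii) establishing a refined super-convergence estimate for the SIF error at the exact fold parameter, and (iii) combining these via a Taylor expansion at the fold.

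For step (i), I would apply the scalar Implicit Function Theorem to the equation $(\bar{k}^R)'(s) = 0$. The $C^{\alpha-1}$ regularity from Proposition \ref{prop-b}, Theorem \ref{approx-thm-1}, and a perturbation argument for the linearized system defining $\bar{u}'_s, \bar{k}'_s$ imply that $(\bar{k}^R)'$ converges to $\bar{k}'$ in $C^{\alpha-2}([0,1])$ with the same rate (up to shrinking $\beta$). Since $\bar{k}'(b_j) = 0$ while $\bar{k}''(b_j) \neq 0$ (a consequence of \eqref{nondeg1} via the analysis underlying Proposition \ref{prop-eigen}), we obtain a unique nearby zero $b_j^R$ satisfying $|b_j^R - b_j| \lesssim |(\bar{k}^R)'(b_j)| \lesssim R^{-1/2+\beta}$. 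That $b_j^R$ is itself a quadratic fold in the sense of Definition \ref{deffp} follows by perturbing the simple kernel of $H_{b_j}$ and verifying that \eqref{nondeg2}--\eqref{nondeg1} carry over.

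Step (ii) is the crux, and asserts the super-convergent estimate $|\bar{k}^R_{b_j} - \bar{k}_{b_j}| \lesssim R^{-1+\beta}$. Writing $F(u,k) := \delta_u \E(u,k) \in (\Hcc)^*$, Galerkin orthogonality gives $\langle F(\bar{u}^R_{b_j}, \bar{k}^R_{b_j}), v\rangle = 0$ for all $v \in \Hc^0_R$, while $F(\bar{u}_{b_j}, \bar{k}_{b_j}) = 0$. Taylor-expanding $F$ at the exact solution yields
\begin{equation*}
F(\bar u^R_{b_j}, \bar k^R_{b_j}) = H_{b_j}(\bar u^R_{b_j} - \bar u_{b_j}) + \delta^2_{uk}\E(\bar u_{b_j}, \bar k_{b_j})\,(\bar k^R_{b_j} - \bar k_{b_j}) + \mathcal Q_R,
\end{equation*}
where $\|\mathcal Q_R\|_{(\Hcc)^*} \lesssim R^{-1+2\beta}$ by Theorem \ref{approx-thm-1}. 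Pairing with $\gamma_{b_j}$ and exploiting the self-adjointness of $H_{b_j}$ together with $H_{b_j}\gamma_{b_j} = 0$ annihilates the displacement error, leaving
\begin{equation*}
\delta^2_{uk}\E[\gamma_{b_j},1]\,(\bar k^R_{b_j} - \bar k_{b_j}) = \langle F(\bar u^R_{b_j}, \bar k^R_{b_j}), \gamma_{b_j} - \Pi_R \gamma_{b_j}\rangle - \langle\mathcal Q_R, \gamma_{b_j}\rangle,
\end{equation*}
valid for any $\Pi_R \gamma_{b_j} \in \Hc^0_R$. Choosing $\Pi_R$ to be the natural truncation to $\Omega_R$ and using the eigenvector decay bound \eqref{gammasdecay} gives $\|\gamma_{b_j} - \Pi_R\gamma_{b_j}\|_{\Hcc} \lesssim R^{-1/2+\beta}$; combined with $\|F(\bar u^R_{b_j}, \bar k^R_{b_j})\|_{(\Hcc)^*} \lesssim R^{-1/2+\beta}$ from Theorem \ref{approx-thm-1} and the non-degeneracy \eqref{nondeg2} to divide out the prefactor, this produces the claimed rate.

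For step (iii), since $(\bar k^R)'(b_j^R) = 0$, Taylor expansion around $b_j^R$ and a uniform bound on $(\bar k^R)''$ (from step (i)) give $|\bar k^R_{b_j^R} - \bar k^R_{b_j}| \lesssim |b_j^R - b_j|^2 \lesssim R^{-1+2\beta}$, and the triangle inequality
\begin{equation*}
|\bar{k}^R_{b_j^R} - \bar{k}_{b_j}| \leq |\bar{k}^R_{b_j^R} - \bar{k}^R_{b_j}| + |\bar{k}^R_{b_j} - \bar{k}_{b_j}|
\end{equation*}
concludes the proof after absorbing $\beta$. The main technical obstacle is step (ii): the super-convergence hinges on the precise cancellation obtained by testing against $\gamma_{b_j}$ (requiring both the existence of $\gamma_{b_j} \in \Hcc$ and its pointwise decay from Theorem \ref{thm:us}) together with careful control of the quadratic Taylor remainder $\mathcal Q_R$ under that pairing. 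Conceptually this is the nonlinear analogue of the classical Aubin--Nitsche duality trick for eigenvalue super-convergence, adapted here to a fold of the bifurcation diagram.
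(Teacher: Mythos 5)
Your argument is correct in outline, but it takes a genuinely different route from the paper. The paper does not re-derive the superconvergence mechanism: it quotes Lemma \ref{fplemma} (from \cite[Theorem 4.1]{cliffe_spence_tavener_2000}), which already asserts both the existence of the $M$ approximate quadratic folds $b_j^R$ and the bound of $|\bar k^R_{b_j^R}-\bar k_{b_j}|$ by \emph{squares} of the path error, the path-derivative error, and the best approximation error of $\gamma_{b_j}$ in $\Hc^0_R$; it then supplies the three $O(R^{-1/2+\beta})$ inputs via Theorem \ref{approx-thm-1}, the derivative-convergence Lemma \ref{brezzider} (adapted from \cite[Theorem 2]{Brezzi1}), and the truncation estimate \eqref{tranc-ineq} combined with the decay \eqref{gammasdecay}. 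Your step (ii) essentially re-proves the core of the quoted lemma by hand: Galerkin orthogonality, testing with $\gamma_{b_j}$, using self-adjointness of $H_{b_j}$ and $H_{b_j}\gamma_{b_j}=0$ to annihilate the displacement error, dividing by \eqref{nondeg2}, and controlling the quadratic remainder by Theorem \ref{approx-thm-1} --- this is correct and has the merit of making the cancellation (the ``duality trick'') explicit and self-contained, whereas the paper's route is shorter because the structural inequality is taken from the literature. Your steps (i) and (iii) replace the paper's appeal to Lemma \ref{fplemma} for the existence and location of $b_j^R$ by an implicit-function argument on $(\bar k^R)'=0$ together with $\bar k''(b_j)\neq 0$ (which indeed follows from \eqref{nondeg1}--\eqref{nondeg2} by differentiating $\delta_u\E(\bar u_s,\bar k_s)=0$ twice and testing with $\gamma_{b_j}$), and then exploit the quadratic behaviour of $\bar k^R$ at its own fold to turn $|b_j^R-b_j|\lesssim R^{-1/2+\beta}$ into an $O(R^{-1+2\beta})$ contribution; this is a clean alternative and yields the intermediate estimate on $|b_j^R-b_j|$ explicitly. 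The one place where you have not actually gained anything over the paper is the claim that $(\bar k^R)'$ converges to $\bar k'$ at rate $R^{-1/2+\beta}$ (your ``perturbation argument for the linearized system''): this is precisely the content of Lemma \ref{brezzider}, and your stronger assertion of $C^{\alpha-2}$ convergence of the whole path is more than is needed and is not justified as stated --- what is required (and obtainable by differentiating the defining equations $F(s,y_R(s))=0$ and using the uniform invertibility of $D_yF$ from the proof of Theorem \ref{approx-thm-1}) is only first-derivative convergence at rate $R^{-1/2+\beta}$ together with a uniform bound on $(\bar k^R)''$ near $b_j$; with that reading, your proof is complete and equivalent in strength to the paper's.
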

{\cb For the proof, we again refer to Section \ref{p-conv}.}

This superconvergence result is well-known in bifurcation theory \cite{cliffe_spence_tavener_2000}, and is observed numerically in our tests in Section \ref{numerics}.
\subsection{Numerical investigation}\label{numerics}
In this section we present results of numerical tests that confirm the rate of decay of $|\tilde{D}\bar{u}_s|$ and $|\tilde{D}\gamma_s|$ established in Theorem \ref{thm:us}, as well as the convergence rates from Theorems \ref{approx-thm-1} and \ref{approx-thm-2}. The computational setup is similar to the one described in \cite[Section 3]{2017-bcscrew}, with $\La$ and $\Rc$ as specified in Section~\ref{setup} and the pair-potential given by
\begin{equation}\label{pot_num}
\phi(r) = \frac{1}{6}\big(1 - \exp(-3r^2)\big).
\end{equation}
We employ a pseudo-arclength numerical continuation scheme to approximate $\B_R$ \cite{Beyn_numericalcontinuation}. To compute equilibria we employ a standard Newton scheme, terminating at an $\ell^{\infty}$-residual of $10^{-8}$.

Theorem \ref{thm:us} suggests that  $\lvert \tilde{D} \bar{u}_s(l) \rvert \lesssim \lvert l \rvert^{-3/2}$ and $\lvert \tilde{D} \gamma_s(l) \rvert \lesssim \lvert l \rvert^{-3/2}$. This is verified in Figure \ref{fig::numerics_decay}.
Theorem \ref{approx-thm-1} suggests that in the supercell approximation of $\B$ in \eqref{Bset} we expect $\|\bar{u}_s^R - \bar{u}_s\|_{\Hcc} + |\bar{k}_s^R - \bar{k}_s| \sim \mathcal{O}(R^{-1/2})$, where $R$ is the size of the domain. To verify this numerically, we first compute $\B_R$ for $R=32,\dots,256$ via a pseudo--arclength continuation scheme. The results are shown in Figure \ref{fig::numerics_convergence1}, with stable segments plotted as solid lines and unstable segments as dashed lines. To measure the distance between the segments of the bifurcation diagram, we compute the Hausdorff distance \cite{RW98} with respect to $\|\cdot\|_{\Hcc}$-norm between the critical points on $\B_R$ (for $R=32,\dots,90.51$) and on $\B_{R_*}$, where $R_* = 256$. The result is shown in Figure \ref{fig::conv1}.

Finally, we test the superconvergence result for the bifurcation points from Theorem \ref{approx-thm-2}, which predicts that $|\bar{k}^R_{b_j^R} - \bar{k}_{b_j}| \sim \mathcal{O}(R^{-1})$. To this end we accelerate the convergence of the sequence $\{\bar{k}^R_{b_j^R}\}$ (for $R=32,\dots,256$) by employing Richardson extrapolation \cite{richardson}, thus giving us an approximate limit value for $\bar{k}_{b_j}$. The values of the approximate limits, as well as the $R^{-1}$ convergence is exhibited in Figure \ref{fig::conv2}.
\begin{figure}[!htb]
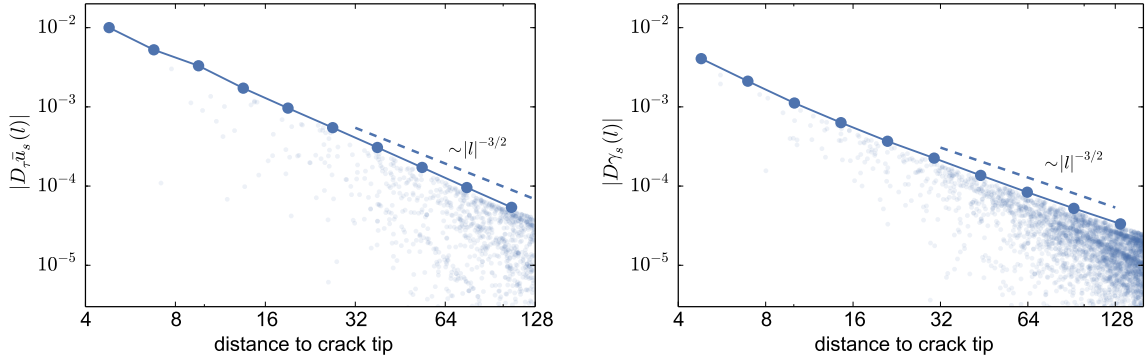

  \begin{subfigure}[t]{.48\textwidth}
    \centering
    \includegraphics[width=\linewidth]{decay_new.pdf}
  \end{subfigure}
  \hfill
  \begin{subfigure}[t]{.48\textwidth}
    \centering
    \includegraphics[width=\linewidth]{paper_Dgamma_decay.pdf}
  \end{subfigure}
  \captionsetup{width=\linewidth}
  \caption{ The decay of $D\bar{u}_s$ and $D\gamma_s$ { \cb for a domain with radius $R=256$}. Transparent dots denote data points $(|l|, |Du_s(l)|)$, solid curves their envelopes. We observe the expected rate of $|l|^{-3/2}$. {\cb The $x$-axis stopping near $|l| = 128$ ensures we do not observe any boundary effects.}   }
  \label{fig::numerics_decay}
\end{figure}

\begin{figure}[!htb]
  \begin{subfigure}[t]{.55\textwidth}
    \centering
    \includegraphics[width=\linewidth]{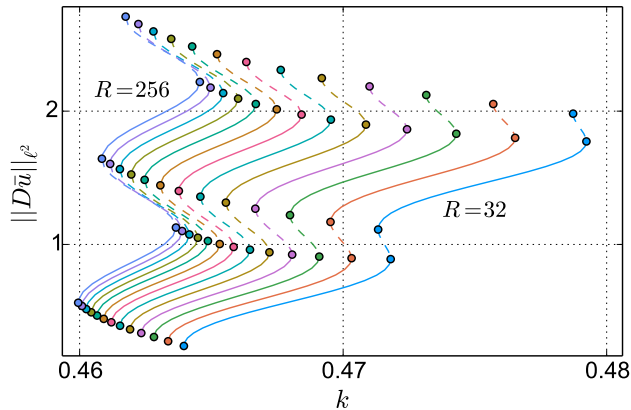}
  \end{subfigure}
  \captionsetup{width=\linewidth}
  \caption{The bifurcation paths $\B_R$ for $R=32,\dots,256$, that is for $R = 2^{n/4}$ for $n=20,\dots 32$. Solid lines denote stables segments, dashed lines unstable segments and dots the bifurcation points.
  } \label{fig::numerics_convergence1}
  \end{figure}
  \begin{figure}[!htb]
    \begin{subfigure}[t]{.39\textwidth}
    \centering
    \includegraphics[width=\linewidth]{paper_hausdorff_new.pdf}
    \caption{$\,$}\label{fig::conv1}
  \end{subfigure}
  \hfill
  \begin{subfigure}[t]{.59\textwidth}
    \centering
    \includegraphics[width=\linewidth]{paper_bif_point.pdf}
    \caption{$\,$}\label{fig::conv2}
  \end{subfigure}
  \captionsetup{width=\linewidth}
  \caption{\subref{fig::conv1} The approximate rate of convergence of the supercell approximation of $\B$, measured by the Hausdorff distance with respect to $\|\cdot\|_{\Hcc}$-norm, compared against the domain with $R=256$.\\
  \subref{fig::conv2} The convergence rate of the values of stress intensity factor at which bifurcations occur. The approximate limit values as predicted by Richardson extrapolation are given in the legend entries. The fact that all unstable-to-stable (and separately stable-to-unstable) fold points occur at the same values indicates that in the limit the bifurcation path is exactly vertical.
  } \label{fig::numerics_convergence2}
\end{figure}
\begin{remark}
The pair-potential $\phi$ defined in \eqref{pot_num} does not satisfy the strong assumption of compact support of $\phi'$ introduced in \eqref{V-new-ass}, but has the slightly weaker property of exponential decay in first derivative. It thus emphasises the already discussed point that \eqref{pot_num} is by no means a necessary condition. {\cb It also emphasises the point that our results are of more practical applicability, despite the severe non-convexity of the energy landscape forcing us to introduce structural assumptions on the bifurcation diagram as opposed to proving them.}
\end{remark}

\section{Conclusion and discussion}\label{conclusion}
The results obtained here, in tandem with those of our previous paper \cite{2018-antiplanecrack}, introduce a mathematical framework in which a rigorous formulation and study of atomistic models of cracks and their propagation is possible. In particular, we have shown how the theory of atomistic modelling of defects developed in \cite{EOS2016,HO2014,2017-bcscrew} can be combined with classical results from bifurcation theory \cite{keller-1977,Brezzi1} to study this problem, and a key insight is the identification of the stress intensity factor as a suitable bifurcation parameter which allows us to explore the energy landscape. {\cb While our results are of conditional nature in that they rely on assumptions that are reasonably justified and numerically verified, our analysis nonetheless } sets earlier numerical work of \cite{Li2013,Li2014} into a rigorous framework and provides a comprehensive explanation as to why the bifurcation diagram is a snaking curve.

While further work is needed to extend our analytical results to more general models (and particularly to the case of other crack modes), from a numerical perspective several aspects of our theory are of universal applicability. We therefore conclude by pointing out a series of interesting conclusions which arise from our analysis.\\

{\cb \subsection{Applicability of the results to other crack models}
Due to reasons explored in the concluding section of \cite{2018-antiplanecrack}, at present we do not see an easy way of rigorously extending our results beyond a 2D model for scalar Mode III crack with nearest neighbours pair interactions on a square lattice. On a practical level, however, it can be numerically verified that our theory is entirely applicable to any 2D model for scalar Mode III crack with arbitrary finite range interactions under an arbitrary feasible interatomic potential and the resulting bifurcation diagrams is indeed a snaking curve.

For vectorial models of other modes of crack the numerical method described is still feasible, but depending on the potential employed, numerical tests indicate that one can expect a more complicated bifurcating behaviour, not least because of surface relaxation effects. The numerical evidence in \cite{Li2013} in particular indicates that the structure of the bifurcation diagram described in this paper does apply to some vectorial models. We hope to investigate this in greater detail in the future.}\\

\subsection{Periodicity of the bifurcation diagram}\label{par-perbif}  In an infinite lattice, shifting the crack tip by one lattice spacing results in a physically identical configuration. Therefore, it is reasonable to conjecture that $\B_R$ in the limit as $R \to \infty$ generates a bifurcation diagram in which the critical points exist for values of the SIF $k$ within a fixed finite interval of admissible values. In Section \ref{numerics} we have exploited the superconvergence result in Theorem \ref{approx-thm-2} to test this hypothesis numerically and the results summarised in Figure \ref{fig::numerics_convergence2} confirm this intuition, as the extrapolated limit values of SIF as $R\to\infty$ for every second bifurcation point are numerically identical, occuring at
\[
	k = 0.45903, \quad k = 0.46234, \quad k = 0.45905, \quad k = 0.46231, \quad k = 0.45905.
\]

A translation invariance in the critical points further implies that, if we denote the centre of the CLE predictor by
\begin{equation}\label{xlambda}
x_{\lambda} := (\lambda,0)
\end{equation}
for some $\lambda \in \Z$, and define $\E_{\lambda}\,:\,\Hcc\times \R \to \R$ by
\[
\E_{\lambda}(\bar u,k) = \sum_{m \in \La} V(D\hat{u}_{k}(m-x_{\lambda}) + D\bar u(m)) - V(D\hat{u}_{k}(m-x_{\lambda})),
\]
then assuming $\B(s)=(\bar u_s,\bar k_s)\in\Hcc\times \R$ is a parametrisation as described in Section~\ref{model}, we naturally have
\begin{equation}\label{tip-moving1}
\delta_u \E_{\lambda} (\bar{u}_s(\cdot-x_{\lambda}),\bar{k}_s) = 0.
\end{equation}
We further notice for any $s \in [0,1]$ the total displacement $y_s =  \hat{u}_{\bar{k}_s} + \bar{u}_s$ can be rewritten as $y_s(m) = \hat{u}_{\bar{k}_s }(m - x_{\lambda}) + w_{s,\lambda}(m)$, where $\lambda \in \Z$ and
\[
 w_{s,\lambda}(m) := \big(\hat{u}_{\bar{k}_s }(m) - \hat{u}_{\bar{k}_s }(m - x_{\lambda})\big) + \bar{u}_s(m).
\]
Crucially,
\begin{equation}\label{hatulambda}
|D\hat{u}_k(l) - D\hat{u}_k(l - x_{\lambda})| \lesssim |l|^{-3/2} \implies w_{s,\lambda} \in \Hcc
\end{equation}
and for any choice of $\lambda \in \Z$
\begin{equation}\label{tip-moving2}
\delta_u \E_{\lambda} (w_{s,\lambda},\bar{k}_s) = \delta_u \E(\bar{u}_s,\bar{k}_s) = 0.
\end{equation}
In other words, no matter which $x_\lambda$ we choose to centre the crack predictor $\hat u_k$ at, the same configuration $y_s=\hat{u}_{\bar{k}_s } + \bar{u}_s$ always remains an equilibrium and $w_{s,\lambda}$ exactly captures the resulting changes to the atomistic correction.

To be precise, let us fix some $s \in [0,1]$ thus giving us a pair $\mathcal{B}(s) = (\bar{u}_s,\bar{k}_s)$, let $K := \bar{k}_s$ and further consider
\[
I_K := \{ s \in [0,1]\,|\, \bar{k}_s = K\}.
\]
Equations \eqref{tip-moving1} and \eqref{tip-moving2} indicate that for any $s' \in I_K$ for which $\bar{u}_{s'}$ is a solution of the same type as $\bar{u}_s$ (either stable, or unstable or a bifurcation point), we can find a unique $\lambda \in \Z$ such that $\bar{u}_{s'} = w_{s,\lambda}$.

In particular we note that the above strongly suggests that in some cases one may be able to prove results about periodicity and boundedness in $k$ of the bifurcation diagram, which {\cb would also have the additional benefit of proving that Assumptions \ref{ass1} and \ref{ass2-new} hold true. The particular difficulty, however, lies in the fact that without these structural assumptions we cannot easily conclude that $\bar{u}_{s'} = w_{s,\lambda}$ for some unique $\lambda \in \Z$. This can potentially be proven under suitable (prohibitively restrictive) technical asssumptions on the potential, as explored for anti-plane screw dislocations in \cite{2014-dislift}.  We hope to investigate this in future work.}\\

\subsection{Interplay between the stress intensity factor and the domain size and its effect on lattice trapping.} The tilt of bifurcation diagrams seen in Figure \ref{fig::numerics_convergence1} indicates that the size of the domain heavily impacts the shape of the corresponding solution curve. Notably, each successive bond-breaking event has a different interval of admissible values of SIF associated to it and the corresponding unstable segments are much shorter than stable ones for small domain sizes. The fact that the influence of such finite-domain effects can still be observed for a fairly large $R$ can be explained by the very slow rate of convergence in Theorem \ref{approx-thm-2}.

In practice, one hopes to investigate crack propagation and associated energy barriers for a fixed value of SIF and subsequently compare it against other admissible choices of SIF to measure the strength of lattice trapping \cite{Thomson_1971,gumbsch_cannon_2000}, measured by the relative height of the energy barrier. Our work indicates that such investigations are particularly challenging due to the extent to which finite--size effects dominate, an effect we observe to be strong even in the simple model considered here. Only a very large choice of truncation radius $R$ ensures that the resulting solution paths are close to the periodic results one expects in the full lattice case. It may be possible to overcome such difficulties by prescribing a more accurate predictor describing the far--field behaviour, in line with the idea of development of solutions introduced in \cite{2017-bcscrew}: this is a clear direction for future investigation.\\

\subsection{Identification of the correct bifurcation parameter.} It is interesting to note that varying the intuitively natural bifurcation parameter $\lambda$ introduced in \eqref{xlambda} to reflect the crack tip at which the continuum prediction is centred in fact fails to capture the bifurcation phenomenon. This can be seen by considering  $\tilde{\E}\,:\,\Hcc\times\R \to \R$ given by
\[
\tilde{\E}(v,\lambda) = \sum_m V\big(D\hat{v}_{\lambda}(m) + Dv(m)\big) - V\big(D\hat{v}_{\lambda}(m)\big),
\]
where $\hat{v}_{\lambda}= \hat{u}_K(\cdot - x_{\lambda})$ for some fixed SIF $K > 0$. This fundamentally differs from the energy defined in \eqref{energy}, as in that case we have linear dependence of the displacement on $k$, $D\hat{u}_{k}(m) = kD\hat{u}_1(m)$, which in turn leads to a particular form of derivatives with respect to $k$; in particular, this ensures we have quadratic fold points.  In $\tilde{\E}$, however, the dependence is inside $\hat{v}_\lambda$, thus the crucial derivative with respect $\lambda$ is given by
\[
  \co{\delta^2_{v\lambda}\tilde{\E}(v,\lambda)[w,h] = h \sum_{m} \delta^2V(D\hat{v}_{\lambda}(m) + Dv(m))Df_{\lambda}(m) \cdot Dw(m),}
\]
where $f_{\lambda}(m) = \nabla \hat{u}_K(m_1-\lambda,m_2)\cdot e_1$. In this case, as in \eqref{hatulambda}, we can conclude $f_{\lambda} \in \Hcc$. This implies that a fold point cannot occur, as that would require that there exists $\gamma \in \Hcc$ such that
\begin{equation*}
\co{\<{\delta_{vv}^2 \tilde{\E}(v,\lambda)\gamma}{v} = \sum_{m} \delta^2 V\big(D\hat{v}_{\lambda}(m) + Dv(m)\big)D\gamma(m) \cdot Dw(m)= 0}
\end{equation*}
for all $w \in \Hcc$, which further implies that
\begin{align*}
\co{\delta^2_{v\lambda}\tilde{\E}(v,\lambda)[\gamma,1]} &=  \sum_{m} \delta^2 V\big(D\hat{v}_{\lambda}(m) + Dv(m)\big)Df_{\lambda}(m) \cdot D\gamma(m) \\
&= \sum_{m} \delta^2 V\big(D\hat{v}_{\lambda}(m) + Dv(m)\big)D\gamma(m) \cdot Df_{\lambda}(m) = 0,
\end{align*}
since $f_{\lambda} \in \Hcc$. This breaches the defining property of a fold point given in Definition~\ref{deffp}; in fact, it is not possible to drive a bifurcation in this way precisely because of the observation made in \eqref{hatulambda} and the resulting periodicity. \\

\subsection{Parameter-driven analysis for other models and defects} An overarching idea of this paper is that a careful analysis of a crucial parameter involved in the model can reveal the energy landscape of the problem, and this can be particularly fruitful in the study of defect migration. In the case of a crack, using the SIF as a driving parameter naturally generalises to more complex fracture models, though in general there may be multiple SIFs.

It would be interesting to undertake future study to see whether such an analysis is applicable more widely to other defects. In the particular case of dislocations, nucleation and motion have been studied in the atomistic context in a number of recent studies, including \cite{CHAUSSIDON2006,ADLGP14,GARG2015,ADLGP17,H17}. Since these defects are the carriers of plastic deformation, the study of their mobility is important, and natural candidates for the parameters in this case are the shear modulus and externally--applied stress \cite{landau1989theory}.
\\
\newpage


\section{Proofs}\label{proofs}
\subsection{Preliminaries}\label{sec:pre}
Our approach is based on two classic results from bifurcation analysis on Banach spaces, cf. \cite{cliffe_spence_tavener_2000}, which we state in this section for convenience.
The first result is known as 'ABCD Lemma' and is adapted from \cite{keller-1977}.
\begin{lemma}[ABCD Lemma]\label{abcd}
Let $H$ be a Hilbert space with the dual $H^*$ and consider the linear operator $M\,:\,H\times \R \to H^* \times \R$ of the form
\[
M:= \begin{bmatrix}
    A       & b \\
    (c,\cdot)_H & d
\end{bmatrix},
\]
where $A\,:\,H\to H^*$ is self-adjoint in the sense that $\<{Av}{w} = \<{Aw}{v}$ for all $v,w\in H$, $b \in H^*\setminus\{0\}$, $c \in H\setminus\{0\}$ and $d \in \R$. Then
\begin{enumerate}[label=(\roman*)]
\item if $A$ is an isomorphism from $H$ to $H^*$, then $M$ is an isomorphism between $H\times \R$ and $H^*\times \R$ if and only if $d - (c,A^{-1}b)_H \neq 0$; and
\item if ${\rm dim}\,{\rm Ker} (A) = {\rm codim}\,{\rm Range}(A) = 1$ with ${\rm Ker} (A) = {\rm span}\{\gamma\}$, then $M$ is an isomorphism if and only if $\<{b}{\gamma} \neq 0$ and $(c,\gamma)_H \neq 0$.
\end{enumerate}
\end{lemma}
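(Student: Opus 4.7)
\textbf{Plan for the proof of Lemma~\ref{abcd}.}

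\emph{Part (i).} The plan is to carry out the standard Schur complement argument. Given $(f,g) \in H^* \times \R$, I write out the equation $M(v,\alpha) = (f,g)$ component-wise as
\[
 Av + \alpha b = f, \qquad (c,v)_H + \alpha d = g.
\]
Assuming $A$ is an isomorphism, the first equation can be uniquely solved for $v = A^{-1}(f - \alpha b)$. Substituting into the second yields the scalar equation
\[
 \alpha\bigl(d - (c, A^{-1}b)_H\bigr) = g - (c, A^{-1}f)_H.
\]
This has a unique solution for every right-hand side if and only if $d - (c,A^{-1}b)_H \neq 0$, in which case I recover an explicit bounded inverse of $M$. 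Conversely, if the Schur complement vanishes, the pair $(v,\alpha) = (-A^{-1}b, 1)$ lies in $\ker M$, so $M$ is not injective.

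\emph{Part (ii).} Here $A$ is Fredholm of index zero by hypothesis. The key observation is that $M$ differs from the block-diagonal operator $\mathrm{diag}(A,0) : H\times\R \to H^*\times \R$ by the finite-rank perturbation $\begin{smallmatrix}(0 & b \\ (c,\cdot)_H & d)\end{smallmatrix}$; thus $M$ is itself Fredholm of index zero, and it suffices to prove that the two nondegeneracy conditions are equivalent to injectivity of $M$. The self-adjointness hypothesis $\<{Av}{w}=\<{Aw}{v}$, combined with $\mathrm{codim}\,\mathrm{Range}(A)=1$ and $\mathrm{Ker}(A) = \mathrm{span}\{\gamma\}$, furnishes the Fredholm alternative in the form
\[
 \mathrm{Range}(A) = \bigl\{f \in H^*\,\big|\,\<{f}{\gamma} = 0\bigr\}.
\]
(The inclusion $\subseteq$ is immediate from self-adjointness since $\<{Av}{\gamma}=\<{A\gamma}{v}=0$, and the reverse inclusion follows from the codimension-one hypothesis.)

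\emph{Injectivity analysis.} Fix a closed complement $W$ of $\mathrm{span}\{\gamma\}$ in $H$, so that $A\big|_W : W \to \mathrm{Range}(A)$ is an isomorphism, and decompose $v = t\gamma + w$ with $w \in W$. The equation $M(v,\alpha)=0$ then becomes
\[
 Aw = -\alpha b, \qquad t(c,\gamma)_H + (c,w)_H + \alpha d = 0.
\]
By the range characterisation, the first equation is solvable only if $\alpha \<{b}{\gamma} = 0$. If $\<{b}{\gamma}\neq 0$ this forces $\alpha=0$, hence $w=0$, and then $(c,\gamma)_H \neq 0$ forces $t=0$, giving injectivity. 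Conversely, if $\<{b}{\gamma}=0$ I can pick $w_0 \in W$ with $Aw_0 = -b$ and build a nonzero kernel element of the form $(w_0 + t\gamma,1)$ by solving the scalar equation for $t$ (using $(c,\gamma)_H\neq 0$, or else $(\gamma,0)$ is itself in the kernel); if $(c,\gamma)_H=0$ then $(\gamma,0)\in\ker M$ directly. This exhausts the cases. Combining injectivity with Fredholm index zero yields the equivalence with being an isomorphism.

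\emph{Main obstacle.} The only subtle point is the range characterisation of $A$ above: it requires extracting from the abstract hypotheses ``self-adjoint in the bilinear-form sense'' and ``$\mathrm{codim}\,\mathrm{Range}(A)=1$'' the conclusion that $\mathrm{Range}(A)$ is the annihilator of $\gamma$. Given closedness of $\mathrm{Range}(A)$ (which follows from the finite codimension), both spaces have codimension one in $H^*$ and one contains the other, so they coincide; once this is in hand, the rest of the proof is a routine linear-algebra calculation on the two-dimensional quotient.
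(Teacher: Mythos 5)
The paper itself does not prove this lemma: it is quoted as a classical result adapted from Keller's work on bordered systems, so there is no in-paper argument to compare against. Your proposal is a correct, self-contained proof along exactly the classical lines: the Schur-complement computation settles (i), including the explicit kernel element $(-A^{-1}b,1)$ when the complement vanishes; and for (ii) the reduction to injectivity via the Fredholm-index-zero observation (the bordering block has rank at most two, hence is a compact perturbation of $\mathrm{diag}(A,0)$, which has index $0+0$) is sound. The two genuinely delicate points are the ones you flag and handle correctly: closedness of $\mathrm{Range}(A)$, which indeed follows from finite codimension of the range of a bounded operator between Banach spaces (via the open mapping theorem applied to $(x,f)\mapsto Ax+f$ on $H\times F$ with $F$ a finite-dimensional algebraic complement), and the identification $\mathrm{Range}(A)=\{f\in H^*\,:\,\langle f,\gamma\rangle=0\}$ from the symmetry of $A$ together with the equality of two nested closed codimension-one subspaces. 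The case analysis in the converse direction of (ii) (using $(\gamma,0)$ when $(c,\gamma)_H=0$, and $(w_0+t\gamma,1)$ when $\langle b,\gamma\rangle=0$) exhausts all possibilities, so the equivalence is fully established.
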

To state the second result we introduce the following setup: let $X$, $Y$ and $Z$ be real Banach spaces and $F\in C^k(U\times Y; Z)$ for some $k \geq 1$, where $U$ is a bounded open subset of $X$.  The total derivative of $F$ at $(x,y)\in X\times Y$ is denoted $DF(x,y) \in \mathcal{L}(X\times Y,Z)$, with partial derivatives denoted $D_x F(x,y) \in \mathcal{L}(X,Z)$ and $D_y F(x,y) \in \mathcal{L}(Y,Z)$. We now state a version of \cite[Theorem 1]{Brezzi1} tailored to our setting.
\begin{theorem}\label{thm:brezzi1}
Suppose a function $y\,:\,U \to Y$ is Lipschitz continuous with Lipschitz constant $c_2$, and there exist constants $c_0$ and $c_1$ and a monotonically increasing function $L_1:\R\to\R$ such that the following hypotheses are satisfied:
\begin{enumerate}[label=(\roman*)]
\item for any $x_0 \in U$, $D_yF\big(x_0,y(x_0)\big)$ is an isomorphism of $Y$ onto $Z$ with
\begin{equation}\label{c0}
\sup_{x_0 \in U}\big\|D_y F\big(x_0,y(x_0)\big)^{-1}\big\| \leq c_0;
\end{equation}
\item we have the uniform bound
\begin{equation}\label{c1}
\sup_{x_0 \in U} \big\|D_x F\big(x_0,y(x_0)\big)\big\| \leq c_1;
\end{equation}
\item for any $x_0 \in U$ and all $(x,y)$ satisfying $\|x-x_0\| + \|y-y(x_0)\| \leq \xi$, we have
\[
\|DF(x,y) - DF(x_0,y(x_0)\| \leq L_1(\xi)\big(\|x-x_0\| + \|y-y(x_0)\|\big).
\]
\end{enumerate}
It then follows that there exist constants $a,d >0$ depending only on $c_0,c_1,c_2$ and $L_1$ so that whenever it holds that
\[
\sup_{x_0 \in U} \|F(x_0,y(x_0)\big)\| \leq d,
\]
then there exists a unique function $g$ defined from $\bigcup_{x_0 \in U} B(x_0,a)$ (union of balls centered at $x_0$ of radius $a$) to $Y$ such that
\[
F\big(x,g(x)\big) = 0.
\]
Moreover, $g$ is a $C^k$ function on its domain of definition, and for all $x_0 \in U$ and all $x \in B(x_0,a)$
\begin{equation}\label{brezzi-ineq}
\|g(x) - y(x_0)\| \leq K_0\Big(\|x-x_0\| + \big\|F\big(x_0,y(x_0)\big)\big\|\Big),
\end{equation}
where $K_0 > 0$ depends only on the constants $c_0$ and $c_1$.
\end{theorem}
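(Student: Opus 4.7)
The theorem is a uniform Newton--Kantorovich style implicit function theorem, in which the given Lipschitz map $y$ is only an approximate solution (since $F(x,y(x))$ need not vanish) and is to be corrected to a true solution $g$. I would prove it by a classical contraction mapping argument, taking care that every constant depends only on $c_0, c_1, c_2$ and $L_1$ and not on the particular base point $x_0 \in U$. Fixing such an $x_0$, define the Newton iteration
\[
T_x(z) := z - D_y F\bigl(x_0, y(x_0)\bigr)^{-1} F(x, z), \qquad z \in Y,
\]
which is well defined by hypothesis (i); its fixed points are precisely the zeros of $F(x, \cdot\,)$ near $y(x_0)$.

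The core of the argument is to show that, for $x \in B(x_0, a)$ and $z \in B(y(x_0), r)$, the map $T_x$ is a $\tfrac{1}{2}$-contraction sending this ball into itself, with $a$ and $r$ depending only on the uniform constants. The contraction estimate follows from the identity
\[
T_x(z_1) - T_x(z_2) = D_y F\bigl(x_0, y(x_0)\bigr)^{-1} \int_0^1 \bigl[D_y F(x_0, y(x_0)) - D_y F(x, z_\theta)\bigr](z_1 - z_2)\, d\theta,
\]
where $z_\theta := \theta z_1 + (1-\theta) z_2$, combined with hypothesis (iii) applied at $\xi = a + r$ and the bound $c_0$ from (i); the bracketed operator norm is at most $L_1(a+r)(a+r)$, so taking $a, r$ small enough forces the contraction constant below $\tfrac{1}{2}$. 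Self-mapping is then checked via
\[
\|T_x(y(x_0)) - y(x_0)\| \leq c_0 \bigl(\|F(x_0, y(x_0))\| + \|F(x, y(x_0)) - F(x_0, y(x_0))\|\bigr) \leq c_0(d + c_1 a + \text{h.o.t.}),
\]
where the first-order expansion uses (ii) and the higher-order remainder is controlled by (iii). Choosing $d$ and $a$ sufficiently small depending only on $c_0, c_1, L_1$ yields invariance, and Banach's fixed point theorem then produces a unique $g(x) \in B(y(x_0), r)$ with $F(x, g(x)) = 0$; telescoping the Newton iterates delivers estimate \eqref{brezzi-ineq}.

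To assemble the local solutions into a single function $g$ on $\bigcup_{x_0 \in U} B(x_0, a)$, I would argue that whenever $x_0, x_0' \in U$ give overlapping balls, the Lipschitz bound $c_2$ on $y$ implies $\|y(x_0) - y(x_0')\| \leq c_2 \|x_0 - x_0'\|$, so — provided $a$ was chosen small relative to $c_2$ and $r$ — both local solutions lie in a common ball on which the contraction mapping uniqueness applies and they must coincide. The $C^k$-regularity of $g$ then follows from the standard implicit function theorem applied at each $(x, g(x))$: hypothesis (i) together with the continuity of $D_y F$ from (iii) ensures $D_y F(x, g(x))$ remains invertible for $x$ near $x_0$, and the $C^k$-smoothness of $F$ transfers to $g$ in the usual way. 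The principal obstacle is quantitative book-keeping: one must verify that the radii $a, r$ and the tolerance $d$ can be chosen \emph{uniformly} in $x_0 \in U$, which is precisely what hypotheses (i)--(iii) together with the Lipschitz continuity of $y$ are designed to guarantee.
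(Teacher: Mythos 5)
This is a statement the paper does not prove at all: it is quoted as ``a version of \cite[Theorem 1]{Brezzi1} tailored to our setting'', so there is no in-paper argument to compare against. Your proposal is essentially a correct reconstruction of the classical Brezzi--Rappaz--Raviart argument: a simplified Newton (chord) map $T_x(z)=z-D_yF(x_0,y(x_0))^{-1}F(x,z)$, contraction on a ball $B(y(x_0),r)$ via hypothesis (iii) and the uniform inverse bound (i), self-mapping via the residual bound $d$ and the first-order expansion controlled by (ii)--(iii), Banach fixed point for existence and local uniqueness, gluing over overlapping balls using the Lipschitz constant $c_2$ of $y$, and the usual implicit function theorem for $C^k$ regularity. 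This matches the structure implicit in the paper's later remark \eqref{choiceofa}, where $a=\min\{\hat a/2,\,b/(2c_2)\}-\epsilon$ with $bL_1(b)\le \tfrac1{2M}$, $M=\max\{c_0,1+c_0c_1\}$: your requirement that $a$ be small relative to $c_2$ is exactly the $b/(2c_2)$ term, and the smallness condition on $aL_1(a)$ is what lets the higher-order remainder be absorbed so that $K_0$ in \eqref{brezzi-ineq} depends only on $c_0$ and $c_1$, as claimed. Two minor points you should make explicit if writing this out in full: the uniqueness assertion must be read locally, i.e.\ among solutions $z$ with $\|z-y(x_0)\|\le r$ (global uniqueness in $Y$ is neither claimed in the source nor provable from the hypotheses), and when gluing you should verify that the contraction estimate holds on the enlarged ball of radius $r+2c_2a$ so that both local fixed points fall within a single uniqueness region. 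Neither is a genuine gap; your plan already gestures at both.
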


For future reference we note that a suitable choice $a$ is given by
\begin{equation}\label{choiceofa}
	a = \min\left\{\frac{\hat{a}}{2},\frac{b}{2c_2}\right\} - \epsilon
\end{equation}
where $\hat{a} = \frac{b}{4M}$, $b$ is such that $b L_1(b) \leq \frac{1}{2M}$, $M = \max\{c_0,1+c_0c_1\}$ and $\epsilon > 0$ is {sufficiently small to ensure that $a$ is positive.}

\subsection{Proofs about the model}\label{p-model}
{\cb We begin with a technical lemma that is required to prove Theorem \ref{Eu-well-def}. 
\begin{lemma}\label{loglemma}
 If $v \in \Hcc$, then, for any $l \in \Gamma_{\pm}$,
 \begin{equation}\label{Prop12_42}
 |v(l)| \lesssim ||v||_{\Hcc}(1+\log|l|).
\end{equation} 
\end{lemma}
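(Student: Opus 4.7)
}

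The plan is to reduce the estimate to a classical two-dimensional discrete Sobolev embedding on a half-lattice, and then obtain the logarithmic bound by a BMO-type dyadic chaining argument. The key observation is that for $l \in \Gamma_+$, both $l$ and the normalisation point $(1/2,1/2)$ lie in the upper half-lattice $\La^+ := \La \cap \{x_2 > 0\}$, and the collection of $\tilde{\mathcal{R}}$-bonds contained in $\La^+$ coincides with the standard nearest-neighbour bonds of $\La^+$, since the only bonds suppressed by \eqref{Rcm} connect $\Gamma_+ \subset \La^+$ to $\Gamma_- \subset \La^-$. In particular, for $v \in \dot{\mathcal{H}}^1$, the restriction $v|_{\La^+}$ satisfies $\|\nabla v|_{\La^+}\|_{\ell^2} \leq \|v\|_{\Hcc}$, and everything below takes place inside $\La^+$. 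The case $l \in \Gamma_-$ is identical by reflection symmetry.

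The first step is a scale-invariant discrete Poincar\'e inequality: for any dyadic upper half-square $Q_j \subset \La^+$ of side $2^j$ (chosen so that $Q_j \subset \{x_2 > 0\}$, e.g.\ $Q_j = [-2^j, 2^j] \times (0, 2^j] \cap \La$),
\[
\frac{1}{|Q_j|} \sum_{m \in Q_j} \bigl|v(m) - \bar{v}_{Q_j}\bigr|^2 \;\leq\; C \sum_{m \in Q_j} \bigl|\tilde{D} v(m)\bigr|^2 \;\leq\; C \|v\|_{\Hcc}^2,
\]
where $\bar{v}_{Q_j}$ is the mean of $v$ over $Q_j$ and $C$ is independent of $j$. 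This follows from the standard discrete Poincar\'e inequality on rectangular boxes in $\Z^2$, using that in two dimensions $(\text{diameter})^2/|Q_j| = O(1)$ compensates the scaling.

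The second step is dyadic chaining. Choose nested squares $Q_0 \subset Q_1 \subset \dots \subset Q_J \subset \La^+$ with $|Q_j| \sim 2^{2j}$, chosen so that $Q_0$ contains $(1/2,1/2)$, each $Q_{j+1}$ contains $Q_j$, and $Q_J$ contains (a small upper half-square around) $l$, with $J \sim \log_2|l|$. Using Cauchy--Schwarz and Step~1,
\[
\bigl|\bar{v}_{Q_j} - \bar{v}_{Q_{j+1}}\bigr| \;\leq\; \sqrt{|Q_{j+1}|/|Q_j|}\left(\frac{1}{|Q_{j+1}|}\sum_{Q_{j+1}}|v - \bar{v}_{Q_{j+1}}|^2\right)^{1/2} \;\leq\; C \|v\|_{\Hcc},
\]
so telescoping gives $|\bar{v}_{Q_0} - \bar{v}_{Q_J}| \leq CJ\|v\|_{\Hcc} \leq C(1 + \log|l|)\|v\|_{\Hcc}$.

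The third step converts averages to pointwise values. A single application of Poincar\'e on a unit upper half-box around $(1/2,1/2)$ (resp.\ around $l$), followed by one further dyadic step between that box and $Q_0$ (resp.\ $Q_J$), gives $|v(1/2,1/2) - \bar{v}_{Q_0}| \leq C\|v\|_{\Hcc}$ and $|v(l) - \bar{v}_{Q_J}| \leq C\|v\|_{\Hcc}$. Combining with Step~2 via the triangle inequality and using $v(1/2,1/2) = 0$ yields $|v(l)| \leq C(1 + \log|l|)\|v\|_{\Hcc}$, as required.

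The main obstacle is the book-keeping imposed by the crack geometry: one cannot use two-sided squares spanning both $\La^+$ and $\La^-$, because the missing bonds across $\Gamma_0$ mean that no inequality involving $\|\tilde{D}v\|_{\ell^2}$ can control the oscillation between a point in $\Gamma_+$ and a point in $\Gamma_-$. Restricting all chaining squares to $\La^+$ sidesteps this, but requires verifying that the Poincar\'e constant remains scale-invariant on upper half-boxes (which is standard but must be stated carefully). Everything else is a routine adaptation of the BMO/chaining argument underlying the two-dimensional $\dot{H}^1 \hookrightarrow \mathrm{BMO}$ embedding to the present discrete, slit-domain setting.
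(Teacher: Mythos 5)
Your route is genuinely different from the paper's and the core of it is sound: the paper chains a sequence of \emph{adjacent} squares of geometrically graded side-length laid out along $\Gamma_+$ (unit squares at $\hat x=(\tfrac12,\tfrac12)$ and at $l$, growing and then shrinking, side-lengths of neighbours within a factor $2$), bounds the difference of averages over neighbouring squares by $\|\nabla Iv\|_{L^2}\lesssim\|v\|_{\Hcc}$ via a John-type lemma for the continuum P1 interpolant on the slit domain, and converts averages to point values by an inverse estimate on the terminal unit squares; you instead stay purely discrete and use \emph{nested} dyadic boxes in the upper half-lattice with a scale-invariant discrete Poincar\'e inequality (the $2$D scaling $L^2/|Q|=O(1)$ is exactly right) and BMO-type telescoping. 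Both schemes produce $O(\log|l|)$ steps of unit cost, and your observation that all $\tilde{\Rc}$-bonds with both endpoints in $\La^+$ are retained, so the crack never interferes, is correct.

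Two concrete repairs are needed, though. First, your Step 3 at the point $l$ fails as written: the "one further dyadic step" compares the average over a unit box around $l$ with the average over $Q_J$, whose side is $\sim|l|$, so the Cauchy--Schwarz constant is $\sqrt{|Q_J|/1}\sim|l|$, not $O(1)$ (the analogous step at $(\tfrac12,\tfrac12)$ is fine only because $Q_0$ has bounded size). You need a second, descending dyadic chain of boxes all containing $l$, from side $\sim|l|$ down to side $1$, contributing another $O(\log|l|)$ uniformly bounded terms — harmless for the final estimate, but it must be there. Second, the case $l\in\Gamma_-$ is not "identical by reflection symmetry": the normalisation point $(\tfrac12,\tfrac12)$, where $v$ vanishes, lies in $\La^+$, so after reflecting you are left with the unestimated offset $v(\tfrac12,-\tfrac12)$. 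The fix is the one the paper makes explicit with its auxiliary point $\hat a=(\tfrac12,-\tfrac12)$: the bond from $(\tfrac12,\tfrac12)$ to $(\tfrac12,-\tfrac12)$ does not cross $\Gamma_0$ (it sits at $x_1=\tfrac12>0$), so $|v(\tfrac12,-\tfrac12)|\le\|v\|_{\Hcc}$, and you then run your chaining argument entirely in the lower half-lattice from $(\tfrac12,-\tfrac12)$ to $l$. With these two additions your argument is complete and a valid alternative to the paper's proof.
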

\begin{proof}
The argument in \cite[Proposition 12(ii)]{2012-lattint} proves the result for the case without a crack present. In that setting, the proof follows directly from  \cite[Theorem 2.2]{2012-ARXIV-ellRd}. In a crack geometry the constructions has to avoid crossing the crack $\Gamma_0$, hence we modify the argument. We distinguish two cases, depending on whether $l \in \Gamma_+$ or $l \in \Gamma_-$. We recall that $\hat{x} = \left(\tfrac{1}{2},\tfrac{1}{2}\right)$ and that by definition $v \in \Hcc \implies v(\hat{x}) = 0$. 
 
\textit{Case 1:}  Let $l \in \Gamma_+$, which implies that $(l-\hat{x})\cdot e_1 = 0$.  We consider a sequence of squares $(Q_i)_{i=0}^N \subset \R^2\setminus\left(\mathcal{D}_{\Gamma}\cup \Gamma_0\right)$, where $\mathcal{D}_{\Gamma}$ denotes the continuum region enclosed by $\Gamma_+$ and $\Gamma_-$. The squares are aligned in the direction $e_2$, which is possible due to the assumption on $l$, and defined as follows. $Q_0$ and $Q_N$ are unit squares corresponding to sites $\hat{x}$ and $l$, defined in such a way that $\hat{x}$ (respectively $l$) is the midpoint of the side of $Q_0$ (resp. $Q_N$) which borders $\Gamma_+$. The squares $Q_1,\dots,Q_{N-1}$ are defined to fill the space between $\hat{x}$ and $l$ in such a way that they have disjoint interiors and are such that their side-lengths differ by at most a factor of 2, with one side of the smaller square contained in one side of the larger square. It is easy to see that there is at most 
\[
N \lesssim (2 + \log|l-\hat{x}|) \lesssim 1 + \log|l|
\]
squares in the sequence. See Figure \ref{fig::crackfinal}. 
\begin{figure}[!htb]
 \centering
 \includegraphics[width=0.9\linewidth]{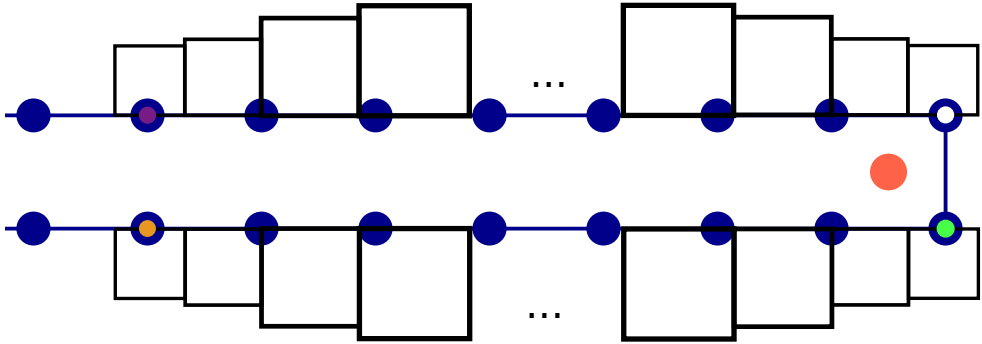}
  \caption{{\cb An example of construction of squares. The white dot represents $\hat{x}$, the green dot is $\hat{a}$, the purple dot a lattice site in Case 1 of Lemma \ref{loglemma} and the orange dot is a lattice site in Case 2.}}\label{fig::crackfinal}
\end{figure}

For any two neighbouring squares $Q_j,\,Q_{j+1}$ it follows from a special case of \cite[Lemma 2]{John1} that 
 \begin{equation}\label{apparg1}
 |(v)_{Q_{j+1}} - (v)_{Q_{j}}| \lesssim \|\nabla Iv\|_{L^2(\R^2\setminus\Gamma_0)} \lesssim \|v\|_{\Hcc},  
 \end{equation}
where
\[
 (v)_{Q_j} := \frac{1}{|Q_j|}\int_{Q_j}Iv(x)\,dx
\]
and $I$ denotes the crack domain P1 interpolation operator employed  in \cite[Theorem 2.3]{2018-antiplanecrack}. The final inequality in \eqref{apparg1} follows from the fact that both components of $\nabla I v$ are piecewise constant and each corresponds to $D_{\rho}v(l)$ for some bond $(m,m+\rho)$. 

As a result
\begin{align}\label{apparg4}
 |(u)_{Q_N} - (u)_{Q_0}| &\leq \sum_{j=1}^N|(u)_{Q_{j}} - (u)_{Q_{j-1}}|\\
 &\lesssim \sum_{j=1}^N \|\nabla Iv\|_{L^2(\R^2\setminus\Gamma_0)} = N\|v\|_{\Hcc} \lesssim (2+\log|l|)\|v\|_{\Hcc} \nonumber.
\end{align}
 
Furthermore, it is naturally true that 
\begin{equation}\label{apparg2}
|(v)_{Q_0}-v(\hat{x})| \leq \|\nabla v\|_{L^{\infty}(Q_0)}\quad\text{and}\; |v(l) - (v)_{Q_N}| \leq \|\nabla v\|_{L^{\infty}(Q_N)}
\end{equation}
and since on each $Q_i$, the piecewise linear interpolant $v$ belongs to a finite-dimensional space, we obtain
\begin{equation}\label{apparg3}
\|\nabla Iv \|_{L^{\infty}(Q_i)} \lesssim \|\nabla Iv \|_{L^2(Q_i)} \lesssim \|\nabla v\|_{L^2(\R^2\setminus\Gamma_0)} \lesssim \|v\|_{\Hcc}, 
\end{equation}
where the first inequality follows from the equivalence of norms for finite-dimensional spaces and the second from extending the domain from $Q_i$ to the whole of $\R^2\setminus\Gamma_0$. 

With (\ref{apparg4}) and (\ref{apparg2})-(\ref{apparg3}) in hand, we obtain
\begin{align}\label{apparg5}
 |v(l)| = |v(l) - v(\hat{x})| &\leq |v(l) - (v)_{Q_N}| + |(v)_{Q_N} - (v)_{Q_0}| + |(v)_{Q_0} - v(\hat{x})| \\
 &\lesssim (1+\log|l|)\|v\|_{\Hcc}\nonumber,
 \end{align}
which concludes the proof for $l \in \Gamma_+$.

\textit{Case 2:} Let $l \in \Gamma_-$. The fact that $l$ is on the other side of the crack relative to $\hat{x}$ deems the previous argument invalid, as we can no longer define the sequence of squares aligned with $\hat{x}$ and $l$ which will be a subset of $\R^2\setminus\left(\mathcal{D}_{\Gamma}\cup\Gamma_0\right)$. Thus we first 'jump' to the other side, that is we $\hat{a} = \left(\tfrac{1}{2},-\tfrac{1}{2}\right)$ and conclude that
\begin{align*}
 |v(l) - v(\hat{x})| &\leq |v(l) - v(\hat{a})| + |v(\hat{a}) - v(\hat{x})|\\
 &\lesssim \|v\|_{\Hcc}((1+\log|l-\hat{a}|))\\
 & \lesssim ||v||_{\Hcc}(1+\log|l|),
\end{align*}
where the second inequality follows from applying (\ref{apparg5}) to a sequence of squares between $l$ and $\hat{a}$ (in fact there are only two such squares) and the fact that a bound on $|v(\hat{a})-v(\hat{x})|$ can be incorporated into the general form . 
\end{proof}
We now show that the model is well-defined}, with the particular emphasis on two new elements of the analysis that are distinct from previous arguments of this kind, e.g. \cite{EOS2016,2018-antiplanecrack}.
\begin{proof}[Proof of Theorem \ref{Eu-well-def}]
We can decompose the energy into a bulk part and a crack surface part by writing
\[
\E(u,k) = \E_{\rm bulk}(u,k) + \E_{\Gamma}(u,k),
\]
where
\[
\E_{\rm bulk}(u,k) := \sum_{m\in\La}\sum_{\rho\in \tilde{\Rc}(m)} \phi\big(D_{\rho}\hat{u}_{k}(m) + D_{\rho}u(m)\big) - \phi\big(D_{\rho}\hat{u}_{k}(m)\big),
\]
\begin{equation*}
\E_{\Gamma}(u,k) := \sum_{m\in\Gamma_{\pm}}\sum_{\rho\in \Rc\setminus\tilde{\Rc}(m)} \phi\big(D_{\rho}\hat{u}_{k}(m) + D_{\rho}u(m)\big) - \phi\big(D_{\rho}\hat{u}_{k}(m)\big).
\end{equation*}
We notice that $\E_{\rm bulk}$ excludes the bonds across the crack and thus is well-defined on $\Hcc\times \R$, as shown in the first part of Theorem 2.3 in \cite{2018-antiplanecrack}.

To establish the same for the $\E_{\Gamma}$, we note that we have the symmetry
\begin{equation*}
\hat{u}_{-k}(l) = \hat{u}_{k}(l_1,-l_2);
\end{equation*}
using this observation, for $m\in\Gamma_{\pm}$ and $\rho \in\Rc\setminus\tilde{\Rc}(m)$ we have
\begin{equation}\label{uhat-across-crack}
|D_{\rho}\hat{u}_k(m)| = |-2\hat{u}_k(m)| \sim |m|^{1/2}
	\quad \text{as $|m| \to \infty$}.
\end{equation}
Furthermore, {\cb it follows from Lemma \ref{loglemma} that }$u \in \Hcc$ implies that $|Du(m)| \lesssim \log|m|$, which in particular implies that
\[
	|D_{\rho}\hat{u}_{k}(m) + D_{\rho}u(m)| \geq C_0 |m|^{1/2} - C_1 \log|m|,
\]
for suitable constants $C_0, C_1$. Therefore, using assumption \eqref{V-new-ass}, it follows that for any $m\in\Gamma_\pm$ with $|m|$ sufficiently large,
  \[
    \phi\big(D_{\rho}\hat{u}_{k}(m)+ D_{\rho}u(m)\big)-\phi\big(D_{\rho}\hat{u}_{k}(m)\big) =0;
  \]
this entails that for each $u \in \Hcc$ we effectively only sum over a finite domain, and implies that $\E_{\Gamma}$ is indeed well-defined over $\Hcc \times \R$.

The differentiability properties of the functional follow from a standard argument, see \cite{2012-ARMA-cb}; here we simply provide formulae for derivatives of relevance to our subsequent arguments. In particular, we have
\[
\<{\delta_k\E(u,k)}{\lambda} = \sum_{m\in\La} (\nabla V(D\hat{u}_k(m)+Du(m))-\nabla V(D\hat{u}_k(m)) \cdot (\lambda D\hat{u}_k(m)),
\]
\[
\<{\delta_u\E(u,k)}{v} = \sum_{m\in\La} \nabla V(D\hat{u}_k(m)+Du(m)) \cdot Dv(m),
\]
\begin{equation}\label{Euk}
\co{\delta^2_{uk}\E(u,k)[v, \lambda]} = \sum_{m\in\La} \nabla^2 V(D\hat{u}_k(m)+Du(m))[\lambda D\hat{u}_k(m)] \cdot Dv(m),
\end{equation}
\[
\co{\<{\delta^2_{uu}\E(u,k)v}{w}} = \sum_{m\in\La} \nabla^2 V(D\hat{u}_k(m)+Du(m))Dv(m) \cdot Dw(m),
\]
\begin{equation}\label{Euuu}
\co{\delta^3_{uuu}\E(u,k)[v,w,z]} = \sum_{m\in\La} \nabla^3 V(D\hat{u}_k(m)+Du(m))[Dv(m),Dw(m), Dz(m)].\qedhere
\end{equation}
\co{As stated, the foregoing expressions are valid for $v \in \Hc^{\rm c}$. To define them for $v \in \Hcc$ one requires an extension argument relying on showing that $\delta_u\E(0) \in (\Hcc)^{*}$, which is proven in \cite[Theorem 2.3]{2018-antiplanecrack}, and analogous results for the remaining terms.}
\end{proof}
We now turn to the analysis of the bifurcation path $\mathcal{B}$.
\begin{proof}[Proof of Proposition \ref{prop-b}]
  This is a standard result and follows from the fact that $\E$ is a $C^{\alpha}$ functional, so we may apply the local uniqueness of the function $g$ whose existence was asserted in Theorem \ref{thm:brezzi1}. We therefore only outline the proof. Define sets corresponding to neighbourhoods of fold points
\[
\mathcal{I}_{\rm f} := \bigcup_{i=1}^M (b_i-\xi,b_i+\xi)\quad\text{ and }\quad \B_{\rm f} := \B(\mathcal{I}_{\rm f}),
\]
Since ${\rm Im}(\B)$ is compact, then so is ${\rm Im}(\B)\setminus \B_{\rm f}$, thus the latter can be covered with a finite collection of neighbourhoods of points $\{(\bar{u}_{s_i},\bar{k}_{s_i})\}_{i=1,\dots,N}$. It will be shown in the proof of Theorem \ref{approx-thm-1} that $H_{s_i} = \delta^2_{uu}\E(\bar{u}_{s_i},\bar{k}_{s_i})$ at each such point is an isomorphism, thus rendering Theorem \ref{thm:brezzi1} applicable to $\delta_u\E(\bar{u}_{s_i},\bar{k}_{s_i})$, giving us a locally unique $C^{\alpha-1}$ graph of critical points $k\mapsto u(k)$, which by its uniqueness together with injectivity of $\B$ has to coincide with $(\bar{u}_s,\bar{k}_s)$, thus ${\rm Im}(\B)\setminus \B_{\rm f}$ is a piecewise $C^{\alpha-1}$ manifold. To establish the same in $\B_{\rm f}$, for each fold point $b_i$, one considers an extended system  $\tilde{F}\,:\,(\Hcc\times\R)\times\R$ given by $\tilde{F}(u,k,t) = \big(\delta_{u}\E(u,k),(u-\bar{u}_{b_i},\gamma_{b_i})_{\Hcc} - t\big)$, where $\gamma_{b_i}$ was introduced in Definition \ref{deffp}. The ABCD Lemma is applicable to this extended system evaluated at $(\bar{u}_{b_i},\bar{k}_{b_i},b_i)$, thus ensuring that Theorem \ref{thm:brezzi1} is also applicable, giving us a locally unique $C^{\alpha-1}$ graph $t\mapsto (u(t),k(t))$, where in particular $k(0) = \bar{k}_{b_i}$. Again, due to uniqueness this must coincide with $(\bar{u}_s,\bar{k}_s)$, and hence this finishes the argument.
%
\end{proof}

Likewise, the existence of an eigen-pair can be established.
\begin{proof}[Proof of Proposition \ref{prop-eigen}]
In what follows we always consider a system $G\,:\, B \times Y \to Z$ where $B \subset [0,1]$, $Y = \Hcc \times \R$ and $Z = (\Hcc)^* \times \R$, given by
\begin{equation}\label{Gext}
\co{G(s,\gamma,\mu):= (H_s\gamma - \mu \Rie\gamma, (c,\gamma)_{\Hcc} - 1),}
\end{equation}
where $c \in \Hcc$ will be chosen appropriately. We consider two subsets of $\B_{pt}$ seperately.
\medskip

\noindent
Throughout this proof we endow the product spaces with their canonical norms,
for example, $\|(u,k)\|_{\Hcc \times \R} = \|u\|_{\Hcc} + |k|$.

{\it (a) Vicinity of a bifurcation point:} We let $c = \gamma_{b_i}$ {\cb and, in order to simplify the notation for derivatives of $G$, we introduce $y: = (\gamma,\mu)$} and observe that
\[
{\cb D_y G(b_i,\gamma_{b_i},\mu_{b_i}) := D_y(b_i,y)_{y=(\gamma_{b_i},\mu_{b_i})} }= \begin{pmatrix}
  H_{b_i} - \mu_{b_i} \Rie & -\gamma_{b_i} \\
  (c,\cdot)_{\Hcc} & 0
 \end{pmatrix}.
\]
Thus, Lemma \ref{abcd} and Theorem \ref{thm:brezzi1} together imply that, for $s \in (b_i-\xi,b_i+\xi)$, where $\xi > 0$ is small enough (cf. \eqref{S_st_unst}), there exists an eigen-pair $(\mu_s, \gamma_s)$. To show that $ \mu_s' := \frac{{\rm d}\mu_s}{{\rm d} s} \neq 0$ at $s = b_i$, we differentiate both sides of \eqref{eigen-pair} with respect to $s$ to obtain
\begin{equation}\label{diffs}
\delta_{uuu}^3 \E(\bar{u}_s,\bar{k}_s)[\bar{u}_s',\gamma_s] + \delta_{uuk}^3 \E(\bar{u}_s,\bar{k}_s)[\bar{u}_s,\bar{k}_s'] + H_s[\gamma_s'] = \mu_s' \Rie[\gamma_s] + \mu_s\Rie[\gamma_s'].
\end{equation}
By definition, at a fold point we have $\bar{k}_{b_i}' = 0$ and since we can differentiate $\delta_u\E(\bar{u}_s,\bar{k}_s) = 0$ with respect to $s$ to get $H_s \bar{u}_s' + \bar{k}_s' b_s = 0$, it follows that $\bar{u}_{b_i}' = \alpha\gamma_{b_i}$ for some $\alpha \neq 0$ (constant speed of parametrisation). Testing \eqref{diffs} at $s = b_i$ with $\gamma_{b_i}$ and simplifying, we obtain
\begin{equation}\label{munonzero}
\mu_{b_i}' = \alpha\<{\delta^3_{u u u} \E(\bar{u}_{b_i},\bar{k}_{b_i}) [\gamma_{b_i}, \gamma_{b_i}]}{\gamma_{b_i}} \neq 0,
\end{equation}
which is nonzero by Assumption \eqref{nondeg1}. This completes case (a).\medskip

{\it (b) Unstable segment away from bifurcations: } We assume without loss of generality that at the bifurcation point $b_i$ we switch from a stable segment to an unstable segment. The result in (a) establishes existence of an eigenvector for $s \in (b_i-\xi,b_i+\xi)$, so we let $t_1 := b_i+\xi-\epsilon$, where $0 < \epsilon < \xi$ and thus  are able to set $c = \gamma_{t_1}$ in \eqref{Gext}. A subsequent application of Theorem~\ref{thm:brezzi1} to system $G$ with this newly chosen $c$ yields existence of a new interval $(t_1 - \xi_1, t_1 + \xi_1) \subset \mathcal{I}_{\rm pt}$ for which the premise of the theorem is true. This procedure can be iterated, for example by incrementing $t_2 := t_1 + \xi_{1}/2$ and repeating the argument. To cover the entire unstable segment in this way we need to bound $\xi_j$ from below, independently of $t_j$.

Due to \eqref{munonzero} we know that $\mu_{t} < 0$, which implies that the subspace $U_{t}$ from Assumption \ref{ass2-new} can be characterised as
\begin{equation}\label{Us-unstable}
U_{t} = \{v \in \Hcc\,|\, (v,\gamma_{t})_{\Hcc} = 0\}.
\end{equation}
With this in hand we consider any $(u,k) \in \Hcc \times \R$, decompose $u$ as $u = \alpha \gamma_{t} + v$, where $\alpha \in \R$ and $v \in U_{t}$, and aim to uniformly bound
\[
\co{\|D_y G(t,\gamma_{t},\mu_{t})[u,k]\| = \big\|H_{t}u - \mu_{t}\Rie u - k\Rie \gamma_{t} \big\| + \big|(\gamma_{t},u)_{\Hcc}\big|}
\]
from below.

To do so, we observe that
\begin{align}
		\label{eigeq1}
\<{(H_t - \mu_t\Rie)u - k \Rie\gamma_{t}}{\textstyle \frac{v}{\|v\|_{\Hcc}}}
	&= \<{(H_t - \mu_t\Rie)v}{\textstyle \frac{v}{\|v\|_{\Hcc}}} \geq (c-\mu_t)\|v\|_{\Hcc}, \\
	\label{eigeq2}
\<{(H_t - \mu_t\Rie)u - k \Rie\gamma_{t}}{-\gamma_{t}} &= \<{(H_t - \mu_t\Rie)\alpha\gamma_{t}-k\Rie \gamma_{t}}{-\gamma_{t}} = k
\end{align}
Together, \eqref{eigeq1}, \eqref{eigeq2} imply that
\begin{align}
	\notag
	\|H_{t}u - \mu_{t}\Rie u - k\Rie \gamma_{t}\|
	&= \sup_{\substack{\tilde{v} \in \Hcc \\ \|\tilde{v}\| = 1}}|\<{H_{t}u - \mu_{t}\Rie u - k\Rie \gamma_{t}}{\tilde{v}}| \\
	\notag
	&\geq \max\big( |c-\mu_t| \|v\|_{\Hcc}, |k| \big) \\
	\label{eq:eigeq12}
	&\geq {\textstyle \frac12} \min\big(|c-\mu_t|, 1\big)
		\big(\|v\|_{\Hcc} + |k|\big).
\end{align}
Moreover, we trivially have
\begin{equation}\label{eigeq3}
	|(\gamma_{t},u)_{\Hcc}| = |\alpha| = \|\alpha \gamma_t\|_{\Hcc}.
\end{equation}

Let $\tilde{c}_0(s)^{-1} =  \min\{\frac{1}{2}(c-\mu_{t}),\frac{1}{2}\} > \min\{\frac{1}{2}(c),\frac{1}{2}\} =: c_0^{-1}$, then
combining \eqref{eq:eigeq12} and \eqref{eigeq3} yields
\[
	\|D_y G(t,\gamma_{t},\mu_{t})[u,k]\|
		\geq
	\tilde{c}_0(t)^{-1}\|(u,k)\|.
\]

In a similar vein, we observe that
\[
\|D_sG(\gamma_{t},\mu_{t},t)\| \leq \|\delta_{uuu}^3 \E(\bar{u}_t,\bar{k}_t)\| + \|\delta_{uuk}^3 \E(\bar{u}_t,\bar{k}_t)\||\bar{k}_t'| =: \tilde{c}_1(t) \leq c_1,
\]
where $c_1 := \max_{t \in \mathcal{I}_{pt}} \tilde{c}_1(t)$, which is guaranteed to exist due to $\E$ being a $C^{\alpha}$ functional and that $\B$ is a $C^{\alpha-1}$ function of $s$, where $\alpha \geq 5$ by assumption.

It is also evident that Condition (iii) from Theorem \ref{thm:brezzi1} is satisfied with
\[
L_1(\xi) := \sup_{(s,y)\in S(\tilde{s},\gamma_{\tilde{s}},\mu_{\tilde{s}},\xi)} \|D^2G(s,y)\|,
\]
where $S(\tilde{s},\gamma_{\tilde{s}},\mu_{\tilde{s}},\xi) = \{ (s,\gamma,\mu) \in \R \times \Hcc \times \R \, \colon \, |s - \tilde{s}| + \|\gamma - \gamma_{\tilde{s}}\| + |\mu - \mu_{\tilde{s}}| \leq \xi \}$.

Guided by \eqref{choiceofa}, we define $M:= \max\{c_0, 1+c_0c_1\}$, choose $b$ such that $b L_1(b) \geq \frac{1}{2M}$, let $\hat{a} = \frac{b}{4M}$ and recall from \eqref{choiceofa} that $\xi = \min\left\{\frac{\hat{a}}{2},\frac{b}{2c_2}\right\} - \epsilon$ with sufficiently small $\epsilon >0$ is an admissible choice for $\xi$ which is in particular independent of $t$.

This completes the proof of part (b).


{\it (c) Regularity: } It remains to establish the $C^{\alpha-2}$-regularity of $s \mapsto (\mu_s, \gamma_s), s \in \mathcal{I}_{\rm  pt}$. To that end,  we note that $G$ is a smooth function of $\gamma$ and $\mu$ and $C^{\alpha-2}$ function with respect to $s$, thus uniqueness and regularity parts of Theorem \ref{thm:brezzi1} immediately imply that $\gamma$ and $\mu$ are $C^{\alpha-2}(\mathcal{I}_{\rm pt})$ functions.
\end{proof}

We are now in a position to prove the spatial regularity of $\bar{u}_s$ and $\gamma_s$.

\begin{proof}[Proof of Theorem \ref{thm:us}]
We begin by defining {\cb 
\[
v(m):= D_{2\tau}\G(m,l) := \G(m,l+\tau) - \G(m,l),
\]
where $\tau \in \tilde{\Rc}(l)$ and } $\G$ is the lattice Green's function for the anti-plane crack geometry, as introduced in \cite[Theorem 2.6]{2018-antiplanecrack} and proven to satisfy decay property
\begin{equation}\label{d1d2G}
|\tilde{D}_1 D_{2\tau} \G(m,l){\cb |} \lesssim (1+|\omega(m)|\,|\omega(l)|\,|\omega(m)-\omega(l)|^{2-\delta})^{-1},
\end{equation}
where $\omega$ is the complex square root mapping defined in polar coordinates as
\[
\omega(l) := r^{1/2}(\cos(\theta/2),\sin(\theta/2)),\,\quad {\cb \theta \in (-\pi,\pi)}
\]
and $\delta > 0$ is arbitrarily small. Here, and throughout this proof,
$\lesssim$ should be read as $\leq C_\delta$ where $C_\delta$ is a constant that may depend on $\delta$.

{\it Proof of Theorem \ref{thm:us}: estimate \eqref{usdecay}: } We first prove the decay estimate for $\bar{u}_s$. We can write
\begin{align}
D_{\tau}\bar{u}_s(l)
&=
\sum_{m \in \La} \tilde{D}\bar{u}_s(m) \cdot \tilde{D}{v}(m)\nonumber\\
&= \sum_{m\in\La}\sum_{\rho \in \tilde{\Rc}(m)} (D_{\rho}\bar{u}_s(m) - \phi'(D_{\rho}\hat{u}_{\bar{k}_s}(m) + D_{\rho}\bar{u}_s(m)))D_{\rho}{v}(m)\label{decay-across-crack}\\
&\qquad + \sum_{m\in\Gamma_{\pm}}\sum_{\rho \in \Rc\setminus\tilde{\Rc}(m)} (- \phi'(D_{\rho}\hat{u}_{\bar{k}_s}(m) + D_{\rho}\bar{u}_s(m)))D_{\rho}{v}(m).\label{decay-across-crack2}
\end{align}
The term \eqref{decay-across-crack} can be estimated by $|l|^{-3/2+\delta}$ due to the argument given in \cite[Theorem 2.4]{2018-antiplanecrack}; that is,
\[
	\bigg|
	\sum_{m\in\La}\sum_{\rho \in \tilde{\Rc}(m)} (D_{\rho}\bar{u}_s(m) - \phi'(D_{\rho}\hat{u}_{\bar{k}_s}(m) + D_{\rho}\bar{u}_s(m)))D_{\rho}{v}(m)
	\bigg|
	\lesssim
	|l|^{-3/2+\delta}.
\]

The additional term \eqref{decay-across-crack2} appears because we define the energy with the homogeneous discrete gradient operator {\cb and concerns the bonds crossing the crack surface. The estimate in \eqref{d1d2G} thus does not apply, but this term can be estimated as follows.} Using \eqref{uhat-across-crack} we see that we only sum over at most $2R_{\phi}^{3}$ lattice sites in \eqref{decay-across-crack2} and thus we can decompose $D_{\rho}v(m)$ into a sum of finite differences along bonds that go around the crack. There will be at most $2R_{\phi}^3$ many of them and each separately decays like $|\Dm{\rho}\Ds{\tau}\G(m,l)| \lesssim |l|^{-3/2+\delta}$, thus ensuring that \eqref{decay-across-crack2} can be bounded by
\[
	\bigg|
	\sum_{m\in\Gamma_{\pm}}\sum_{\rho \in \Rc\setminus\tilde{\Rc}(m)} (- \phi'(D_{\rho}\hat{u}_{\bar{k}_s}(m) + D_{\rho}\bar{u}_s(m)))D_{\rho}{v}(m)
	\bigg|
		\lesssim C |l|^{-3/2+\delta},
\]
where $C < 4R_{\phi}^6$ (though an optimal $C$ is much smaller). This concludes the proof of \eqref{usdecay}.

{\it Proof of Theorem \ref{thm:us}: estimate \eqref{gammasdecay}: } To estimate $\gamma_s$ we employ an analogous argument. We begin by writing
\begin{align}
D_{\tau}\gamma_s(l) &=
 \sum_{m} \tilde{D}\gamma_s(m) \cdot \tilde{D}{v}(m)\nonumber\\
&= \sum_{m\in\La}\sum_{\rho \in \tilde{\Rc}(m)} (D_{\rho}\gamma_s(m) - \phi''(D_{\rho}\hat{u}_{\bar{k}_s}(m) + D_{\rho}\bar{u}_s(m))D_{\rho}\gamma_s(m))D_{\rho}{v}(m)\label{decay-across-crack3}\\
& \qquad + \sum_{m\in\Gamma_{\pm}}\sum_{\rho \in \Rc\setminus\tilde{\Rc}(m)} (- \phi''(D_{\rho}\hat{u}_{\bar{k}_s}(m) + D_{\rho}\bar{u}_s(m))D_{\rho}\gamma_s(m))D_{\rho}{v}(m).\label{decay-across-crack4}
\end{align}
Using precisely the same argument as for \eqref{decay-across-crack2} we can bound
\eqref{decay-across-crack4} by $|l|^{-3/2+\delta}$.

To estimate \eqref{decay-across-crack3} we Taylor-expand $\phi''$ around 0 and{\cb , noting that we assume that $\phi''(0)=1$, we} observe that
\begin{align}
&\left|\sum_{m\in\La}\sum_{\rho \in \tilde{\Rc}(m)} (D_{\rho}\gamma_s(m) - \phi''(D_{\rho}\hat{u}_{\bar{k}_s}(m) + D_{\rho}\bar{u}_s(m))D_{\rho}\gamma_s(m))D_{\rho}{v}(m)\right| \label{decaygamma_aux} \\
&\lesssim \|\gamma_s\|_{\Hcc}\left(\sum_{m\in\La} |R(m)|^2|\tilde{D}v(m)|^2 \right)^{1/2} \nonumber,
\end{align}
where $|R(m)| \lesssim |m|^{-1}$ is the remainder of the expansion. This readily implies that $|D_{\tau}\gamma_s(l)| \lesssim |l|^{-1}$. Thus looking again at \eqref{decaygamma_aux}, instead of applying Cauchy-Schwarz inequality, we directly observe that
\begin{align*}
&\left|\sum_{m\in\La}\sum_{\rho \in \tilde{\Rc}(m)} (D_{\rho}\gamma_s(m) - \phi''(D_{\rho}\hat{u}_{\bar{k}_s}(m) + D_{\rho}\bar{u}_s(m))D_{\rho}\gamma_s(m))D_{\rho}{v}(m)\right| \\
&\lesssim \sum_{m\in\La} |R(m)|{\cb|}\tilde{D}\gamma_s(m)||\tilde{D}v(m)| \lesssim |l|^{-3/2+\delta},
\end{align*}
since $|R(m)|{\cb|}\tilde{D}\gamma_s(m)| \lesssim |m|^{-2}$. As before, $\delta > 0$ is arbitrarily small. This completes the proof of the second bound \eqref{gammasdecay}.
\end{proof}

\begin{remark}
It is interesting to note that while the model includes a full interaction between nearest-neighbour atoms, even across the crack, it is nonetheless the lattice Green's function for the fractured domain that is employed to estimate the atomistic solutions. The homogeneous lattice Green's function fails because the finite differences of $\hat{u}_k(m)$ across the crack grow like $\sim |m|^{1/2}$.
\end{remark}

\subsection{Convergence Proofs}\label{p-conv}
In tandem with the results from bifurcation theory stated in Section \ref{sec:pre}, in order to prove the results from Section \ref{sec:approx} we rely on the following auxiliary result from \cite{EOS2016} that was adapted to domain with cracks in \cite{2018-antiplanecrack}.
\begin{lemma}
There exists a truncation operator $T_R\,:\, \Hcc \to \Hc_R^0$ such that $T_R v = 0$ in $\Lambda \setminus B_R$ and which satisfies
\begin{equation}\label{tranc-ineq}
\|T_R v - v\|_{\Hcc} \lesssim \|v\|_{\Hcc(\La \setminus B_{R/2})} := \left(\sum_{m \in \La \setminus B_{R/2}} |\tilde{D}v(m)|^2\right)^{1/2}\quad \forall v \in \Hcc.
\end{equation}
\end{lemma}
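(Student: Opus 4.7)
The plan is to construct $T_R$ explicitly using a radial cut-off combined with a mean-value subtraction, and to control the error via a discrete product rule together with a Poincar\'e--Wirtinger estimate on the cracked annulus $A_R := B_R \setminus B_{R/2}$.

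First I would fix a cut-off function $\eta_R: \La \to [0,1]$ with $\eta_R \equiv 1$ on $B_{R/2} \cap \La$, $\eta_R \equiv 0$ on $\La \setminus B_R$, and $|D_\rho \eta_R(m)| \lesssim 1/R$ for all $m \in \La$ and $\rho \in \Rc$ (for instance, piecewise linear in $|m|$). Letting $\bar v$ denote the arithmetic mean of $v$ over $A_R \cap \La$, restricted to one side of the crack so as to avoid the jump across $\Gamma_0$, I would define
\[
  T_R v(m) := \eta_R(m)\bigl(v(m) - \bar v\bigr).
\]
Since $\eta_R$ vanishes on $\La \setminus B_R$, this lies in $\Hc_R^0$.

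Next I would decompose $\La$ into the three regions $B_{R/2}$, $A_R$, and $\La \setminus B_R$ and examine the error $w := T_R v - v$ on each. On $B_{R/2}\cap\La$ we have $w \equiv -\bar v$ constant, so $\tilde D w \equiv 0$. On $\La \setminus B_R$, $w = -v$, so the contribution to $\|w\|_{\Hcc}^2$ is at most $\sum_{m \notin B_R}|\tilde D v(m)|^2 \le \|v\|_{\Hcc(\La \setminus B_{R/2})}^2$. The bonds with at least one endpoint in $A_R$ are handled via the discrete Leibniz identity
\[
  \tilde D_\rho w(m) = \bigl(\eta_R(m+\rho)-1\bigr)\tilde D_\rho v(m) + \bigl(v(m)-\bar v\bigr)\tilde D_\rho \eta_R(m),
\]
which together with Young's inequality and the bound $|D\eta_R|\lesssim 1/R$ yields
\[
  |\tilde D w(m)|^2 \lesssim |\tilde D v(m)|^2 + R^{-2}|v(m)-\bar v|^2.
\]
Summing over $m \in A_R$ and applying the scaled Poincar\'e--Wirtinger estimate
\[
  \sum_{m \in A_R \cap \La} |v(m) - \bar v|^2 \lesssim R^2 \sum_{m \in A_R \cap \La} |\tilde D v(m)|^2
\]
absorbs the factor $R^{-2}$, and gives the desired bound since $A_R \subset \La \setminus B_{R/2}$.

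The main obstacle is justifying the scaled Poincar\'e--Wirtinger inequality on the cracked annulus $A_R$. For a standard (uncracked) annulus of diameter $\sim R$ this is a routine scaling argument. In the present setting, however, $\Gamma_0$ cuts $A_R$ and we must view $A_R \setminus \Gamma_0$ as a Lipschitz domain; choosing $\bar v$ as the mean on a single connected side sidesteps the discontinuity across the crack, and a rescaling $x \mapsto x/R$ reduces the estimate to a fixed reference Lipschitz domain with constant $\mathcal{O}(1)$, which restores the $R^2$ factor upon unscaling. Transferring the continuum estimate back to the lattice is then done via the P1 interpolation in the cracked geometry from \cite{2018-antiplanecrack} and the standard norm-equivalence on finite element spaces.
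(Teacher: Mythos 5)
The paper does not actually prove this lemma: it is imported from \cite{EOS2016}, as adapted to the crack geometry in \cite{2018-antiplanecrack}, and the construction used there is exactly yours --- a cut-off multiplying $v$ minus a suitable annular average, a discrete product rule, and a Poincar\'e--Wirtinger inequality on the annulus transferred to the lattice via the cracked P1 interpolant. So in substance your proposal reproduces the intended argument and is correct. Two small technical points deserve care. First, the rescaled slit annulus is not literally a Lipschitz domain (the slit has zero width), but Poincar\'e--Wirtinger with the $R^2$ scaling still holds because it can be written as a finite union of overlapping Lipschitz annular sectors (equivalently, it is a John domain); note also that since the slit annulus is connected, there is no need to restrict the mean to ``one side'' of $\Gamma_0$, although taking the mean over a subregion of comparable measure is harmless. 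Second, with $\eta_R \equiv 1$ exactly on $B_{R/2}$ the interpolation cells and the Poincar\'e step use gradient information slightly inside $B_{R/2}$, which would not be covered by the stated right-hand side $\|v\|_{\Hcc(\La\setminus B_{R/2})}$; this is fixed by leaving a collar, e.g.\ taking $\eta_R \equiv 1$ on $B_{5R/8}$ and $\supp \eta_R \subset B_{7R/8}$, so that all bonds and cells entering the estimate lie in $\La\setminus B_{R/2}$.
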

We can now prove the main result of this section.
\begin{proof}[Proof of Theorem \ref{approx-thm-1}]
We consider an extended system $F\,:\,B\times Y \to Z$ where $B = [0,1]$, $Y = \Hc^0_R \times \R$ and $Z = (\Hc^0_R)^* \times \R$ given by
\begin{equation}\label{Fdef}
F(s,y) = (\delta_u \E(u_y,k_y), (u_y - \bar{u}_s,\bar{u}_s')_{\Hcc}),
\end{equation}
where $\bar{u}_s' = \frac{d \bar{u}(s)}{ds}$ and $y = (u_y,k_y)$. We further introduce a mapping $y_R\,:\,B \to Y$ given by $y_R(s) = (T_R\bar{u}_s,\bar{k}_s)$. We shall now show that, with the help of ABCD Lemma, $F$ satisfies the conditions of Theorem~\ref{thm:brezzi1}.

One can easily obtain that
\[
D_y F(s,y_R(s)) = \begin{pmatrix}
  \delta^2_{uu} \E(T_R\bar{u}_s,\bar{k}_s) & \delta^2_{uk} \E(T_R\bar{u}_s,\bar{k}_s) \\
  (\bar{u}_s',\cdot)_{\Hcc} & 0
 \end{pmatrix} =: \begin{pmatrix}
  A_s^R & b_s^R \\
  (\bar{u}_s',\cdot)_{\Hcc} & 0
 \end{pmatrix}.
\]
We further define $A_s:= H_s = \delta^2_{uu} \E(\bar{u}_s,\bar{k}_s)$ (renamed to keep intuitive notation) and $b_s := \delta^2_{uk} \E(\bar{u}_s,\bar{k}_s)$ and observe that
\[
D_y F(s,y_R(s)) = \begin{pmatrix}
  A_s & b_s \\
  (\bar{u}_s',\cdot)_{\Hcc} & 0
 \end{pmatrix} + \begin{pmatrix}
  A_s^R-A_s & b_s^R-b_s \\
  0 & 0
 \end{pmatrix} =: M_s^1 + M^2_s.
\]
Here, we treat $A_s$ as a restriction to $\Hc_0^R$ and $b_s$ as an element of $(\Hc_0^R)^*$. Since $T_R \bar{u}_s \to \bar{u}_s$ as $R \to \infty$ strongly in $\Hcc$ (a consequence of \eqref{tranc-ineq}, the decay estimate from Theorem \ref{thm:us} and $\E \in C^{\alpha}$),
\begin{equation} \label{eq:prf:convergence_As_Bs}
	\| A_s^R - A_s \|_{\mathcal{L}(\Hcc,(\Hcc)^*)} + \| b_s^R - b_s \|_{(\Hcc)^*} \to 0
\end{equation}
as $R \to \infty$.
Our strategy will therefore be to apply the ABCD Lemma to $M_s^1$, interpreted as an operator from $\Hcc \times \R$ to $(\Hcc)^*\times \R$, and show that $M_s^2$ is a small perturbation. 

To carry out this strategy we begin by differentiating  $\delta_u\E(\bar{u}_s,\bar{k}_s) = 0$ with respect to $s$ to obtain
\begin{equation}\label{Asbs}
A_s \bar{u}_s' + \bar{k}_s' b_s = 0
\end{equation}
along the bifurcation path $\B$. At a fold point, when $s=b_i$, due to \eqref{nondeg2}, we have $\bar{k}_s' = 0$, thus revealing that $\bar{u}_{b_i}' = \alpha \gamma_{b_i}$ for some non-zero $\alpha \in \R$. For $s\neq b_i$, the operator $A_s$ is invertible and thus
\begin{equation}\label{usdash}
\bar{u}_s' = -\bar{k}_s' (A_s)^{-1}b_s.
\end{equation}
We can now show that $M_s^1$ satisfies the conditions of ABCD Lemma.

{\it Case 1, $s \in \mathcal{I}_{\rm pos}$: }
Suppose that $s \in \mathcal{I}_{\rm pos}$ from \eqref{S_st_unst}. In this case $A_s$ is an isomorphism due to \eqref{stab2-new} and \eqref{Us-stable}. Thus, to apply ABCD Lemma to $M_s^1$, we have to check that $(\bar{u}'_s,(A_s)^{-1}b_s)_{\Hcc} \neq 0$, which is true since
\[
(\bar{u}'_s,(A_s)^{-1}b_s)_{\Hcc} = -\bar{k}_s' (\bar{u}_s',\bar{u}_s')_{\Hcc} \neq 0,
\]
since by definition at a regular point we have $\bar{k}_s' \neq 0$ and $\bar{u}_s' \neq 0$.

{\it Case 2, $s \in \mathcal{I}_{\rm pt}$: } Now suppose $s \in \mathcal{I}_{\rm pt}$ but $s \neq b_i\;\forall i \in \{1,\dots,M\}$. It can be shown that $A_s$ remains an isomorphism as follows. Proposition~\ref{prop-eigen} tells us that we have an eigen-pair $(\mu_s,\gamma_s)$ satisfying \eqref{eigen-pair}. Any $v \in \Hcc$ can be decomposed into $v = \alpha \gamma_s + w$, where $w \in U_s$ with $U_s$ given by \eqref{Us-unstable} and $\alpha \in \R$. Thus,
\[
\|A_s v\| =  \sup_{\substack{\tilde{v} \in \Hcc \\ \|\tilde{v}\| = 1}} |\<{A_s v}{\tilde{v}}| \geq \frac{1}{2}\left(|\<{A_s v}{\gamma_s}| + \big|\<{A_s v}{\textstyle \frac{w}{\|w\|}}\big|\right),
\]
and we can further estimate
\begin{align*}
|\<{A_sv}{\gamma_s}| &= \big|\alpha\<{A_s \gamma_s}{\gamma_s} + \<{A_s w}{\gamma_s}\big| = |\alpha \mu_s|,
\\
|\<{A_s v}{\textstyle\frac{w}{\|w\|}}| &= |\alpha\<{A_s \gamma_s}{\textstyle\frac{w}{\|w\|}} + \<{A_s w}{\textstyle\frac{w}{\|w\|}}| \geq \frac{c}{\|w\|} \|w\|^2 = c\|w\|,
\end{align*}
where $c$ is the stability constant from Assumption \ref{ass2-new}. This, together with the fact that $\|v\| \leq |\alpha| + \|w\|$ readily implies that we can set $\tilde{c} := \frac{1}{2} \min\{|\mu_s|, c\}$ and conclude that for all $v \in \Hcc$
\[
\|A_s v\| \geq \tilde{c}\|v\|.
\]
Thus, as in the case $s \in \mathcal{I}_{\rm pos}$, \eqref{usdash} ensures that we can apply the ABCD lemma and deduce again that $A_s$ is an isomorphism.

{\it Case 3, $s = b_i$: } Finally, suppose $s = b_i$ for some $i \in \{1,\dots,M\}$. Due to Assumption \ref{ass2-new} we know that the kernel of $A_s$ is one-dimensional at a fold point and thanks to Proposition \ref{prop-eigen} we know that it is spanned by $\gamma_s$, which means that \eqref{Asbs} implies that $\bar{u}_s' = \gamma_s$. By Definition \ref{deffp} we know that $\<{b_s}{\gamma_s} \neq 0$, which implies that the ABCD Lemma is again applicable.

{\it Uniform Stability of $M_s^1$: } We have shown so far that, for all $s \in [0, 1]$, $M_s^1$ is an isomorphism from $\Hcc \times \R$ to $(\Hcc)^* \times \R$. In particular, this implies that for any $x = (u_{x},k_{x}) \in \Hcc \times \R$ we have
\[
\|M_s^1x\| \geq \tilde{c}_s \|x\|,
\]
where $\tilde{c}_s > 0$.

Since $s \mapsto M_s^1$ is continuous in operator-norm due to smoothness of $\E$ established in Theorem \ref{Eu-well-def} and smoothness of $s\mapsto (\bar{u}_s,\bar{k}_s)$ established in Proposition \ref{prop-b}, it follows that the infimum $\inf \tilde{c}_s$ is attained on $[0, 1]$ and must therefore be positive. In summary, we have established the existence of $\tilde{c} > 0$ such that
\[
 \|M_s^1 x\| \geq \tilde{c} \|x\| \qquad
 	\forall s \in [0, 1], \quad x \in \Hc_0^R \times \R.
\]

{\it Uniform Stability: } Next, using the definition of $M_s^2$ we can bound
\begin{align*}
\|D_yF(s,y_R(s)) x\| &\geq \|M_s^1 x\| - \|M_s^2 x\| \geq \tilde{c}\| x\| - \|A_s^R - A_s\|\|u_{x}\| - \|b_s^R-b_s\||k_x| \\
	&\geq \frac{\tilde{c}}{2}\|x\|,
\end{align*}
for $R$ large enough, thus ensuring that $D_yF(s,y_R(s))$ is an isomorphism from $\Hc_0^R \times \R$ to $(\Hc_0^R)^* \times \R$, thus satisfying condition (i) from Theorem \ref{thm:brezzi1}, with uniform bound
\[
	\|D_yF(s,y_R(s)) x\| \geq \frac{\tilde{c}}{2}\|x\|
		\qquad \forall s \in [0, 1], \xi \in \Hc_0^R \times \R,
\]
that is $c_0$ from \eqref{c0} is given by $c_0= \frac{2}{\tilde{c}}$.

{\it Conclusion: } So far we have confirmed Condition (i) of
Theorem \ref{thm:brezzi1}. To conclude the proof, we now need to also check
conditions (ii, iii).

It can be readily checked that $\|D_s F(s,y_R(s))\| = | -1 + (T_R \bar{u}_s - \bar{u}_s,\bar{u}_s'')_{\Hcc}| \leq 2$ for $R$ large enough. Thus the condition (ii) in Theorem \ref{thm:brezzi1} is satisfied with $c_1$ in \eqref{c1} given by $c_1 = 2$.

The condition (iii) from Theorem \ref{thm:brezzi1} is satisfied with
\[
L_1(\xi) := \sup_{(s_*,y_*)\in S(s,y_R(s),\xi)} \|D^2F(s_*,y_*)\|,
\]
where $S(s,y_R(s),\xi) = \{ (s_0,y_0) \in \R \times (\Hc_0^R \times \R) \, \colon \, |s - s_0| + \|T_R\bar{u}_s - u_y\| + |\bar{k}_s - k_y| \leq \xi \}$.

Finally, we observe that
\[
\sup_{s \in [0,1]}\|F(s,y_R(s))\| = \sup_{s \in [0,1]} \left(\|\delta_u \E(T_R\bar{u}_s,\bar{k}_s)\| + |(T_R\bar{u}_S - \bar{u}_s,\gamma_s)|\right) \to 0,
\]
as $R \to \infty$, which implies that no matter how large constants $c_0,c_1$ and $c_2$ were and how badly behaved $L_1$ was, we would still fall within the regime where the result of Theorem \ref{thm:brezzi1} was applicable for $R$ large enough.

We can thus conclude that there exists $\B_R\,:\,[0,1]\to \Hc_0^R\times \R$ given by $\B_R(s):=(\bar{u}^R_s,\bar{k}_s^R)$, such that $F(s,(\bar{u}^R_s,\bar{k}_s^R)) = 0$, which in particular implies $\delta_u \E(\bar{u}^R_s,\bar{k}_s^R) = 0$. Furthermore, using \eqref{brezzi-ineq} we can conclude that
\[
\|\bar{u}_s^R - T_R\bar{u}_s\|_{\Hcc} + |\bar{k}_s^R - \bar{k}_s| \leq K_0\|F(s,y_R(s))\| \lesssim \|T_R\bar{u}_s - \bar{u}_s\|\lesssim R^{-1/2+\beta},
\]
for arbitrarily small $\beta >0$. Crucially, $K_0$ depends only on $c_0$ and $c_1$, which are independent of $s$ and the last inequality follows from Lemma \ref{tranc-ineq} and the regularity estimate from Theorem \ref{thm:us}. This concludes the result, since trivially $\|\bar{u}_s^R - \bar{u}_s\|_{\Hcc} \leq \|\bar{u}_s^R - T_R\bar{u}_s\|_{\Hcc} + \|T_R\bar{u}_s^R - \bar{u}_s\|_{\Hcc}$.

Finally, we note that \eqref{BR_norm_conv_est} follows as an immediate collorary, arguing exactly as in the proof of \cite[Theorem 2.4]{EOS2016pp}.
\end{proof}

To prove our final result, the superconvergence of critical values of SIF, we first need to quote two intermediate technical steps. The first lemma, which highlights the origin of this superconvergence, is taken from \cite[Theorem 4.1]{cliffe_spence_tavener_2000}, restated in our notation for the sake of convenience.

\begin{lemma}\label{fplemma}
Let $(\bar{u}_{b_i},\bar{k}_{b_i}) \in \B$ be a simple quadratic fold point. Under Assumptions \ref{ass1},\ref{ass2-new} \& \ref{ass3}, for $R$ large enough, the approximate bifurcation diagram $\B_R$ has a quadratic fold point at $s=b_i^R$, where $|b_i^R-b_i| \to 0$ as $R \to \infty$. Furthermore,
\begin{align*}
|\bar{k}_R(b^R_i) - \bar{k}(b_i)| &\leq
\left\| \bar{u}_R'(b_i) - \bar{u}'(b_i) \right\|_{\Hcc}^2
+ \left|\bar{k}_R'(b_i) -\bar{k}'(b_i)\right|^2 \\
&\qquad + \|\bar{u}_R(b_i) - \bar{u}(b_i)\|_{\Hcc}^2
+ |\bar{k}_R(b_i) -\bar{k}(b_i)|^2 \\
&\qquad + \inf_{v \in \Hc_0^R}
	\|v - \gamma_{b_i}\|_{\Hcc}^2.
\end{align*}
\end{lemma}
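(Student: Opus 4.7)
The plan is to adapt the classical bifurcation-theoretic superconvergence argument for quadratic fold points, based on an extended Moore--Spence system in which the fold point becomes a regular solution. First, I would introduce the extended system $G:\Hcc\times\R\times\Hcc\to(\Hcc)^*\times(\Hcc)^*\times\R$ defined by
\[
G(u,k,\phi) := \bigl(\delta_u\E(u,k),\;\delta_{uu}^2\E(u,k)\phi,\;(\phi,\gamma_{b_i})_{\Hcc}-1\bigr).
\]
A direct verification, combined with the nondegeneracy conditions \eqref{nondeg2}--\eqref{nondeg1} from Definition \ref{deffp} and two applications of Lemma \ref{abcd} to the block structure of $DG(\bar u_{b_i},\bar k_{b_i},\gamma_{b_i})$, shows that this derivative is an isomorphism of $\Hcc\times\R\times\Hcc$ onto $(\Hcc)^*\times(\Hcc)^*\times\R$. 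Hence the fold point is a \emph{regular} solution of the extended system.

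Next, I would apply Theorem \ref{thm:brezzi1} to the Galerkin restriction $G_R$ of the extended system on $\Hc^0_R\times\R\times\Hc^0_R$, starting from the approximation $(T_R\bar u_{b_i},\bar k_{b_i},T_R\gamma_{b_i})$. Uniform stability of $DG_R$ is inherited from that of $DG$ by operator-norm convergence, exactly as in the proof of Theorem \ref{approx-thm-1}. This yields a unique discrete solution $(\bar u^*_R,k^*_R,\phi^*_R)$ close to the continuous fold, and by the Moore--Spence characterisation this solution is precisely the discrete fold point at $s=b_i^R$, with $b_i^R\to b_i$ as $R\to\infty$.

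For the superconvergence estimate itself, I would split
\[
\bar k_R(b_i^R)-\bar k(b_i) = \bigl[\bar k_R(b_i^R)-\bar k_R(b_i)\bigr] + \bigl[\bar k_R(b_i)-\bar k(b_i)\bigr].
\]
The first bracket is handled using $\bar k_R'(b_i^R)=0$: Taylor expanding $\bar k_R$ around $b_i^R$ gives $|\bar k_R(b_i)-\bar k_R(b_i^R)|\lesssim|b_i-b_i^R|^2$, and, using $\bar k'(b_i)=0$ and the mean value theorem applied to $\bar k_R'$, one obtains $|b_i-b_i^R|\lesssim\|\bar u_R'(b_i)-\bar u'(b_i)\|_{\Hcc}+|\bar k_R'(b_i)-\bar k'(b_i)|$, producing the first two squared terms on the right-hand side of the lemma. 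For the second bracket, the idea is that the triple $(\bar u_R(b_i),\bar k_R(b_i),v_R)$, where $v_R\in\Hc^0_R$ is an arbitrary approximation of $\gamma_{b_i}$, nearly solves the discrete extended system $G_R=0$. Taylor-expanding each component of $G_R$ around $(\bar u(b_i),\bar k(b_i),\gamma_{b_i})$ and invoking that both $\delta_u\E$ and $\delta_{uu}^2\E\,\gamma_{b_i}$ vanish at the true fold point causes the linear-in-error contributions to cancel, leaving a residual that is quadratic in $\|\bar u_R(b_i)-\bar u(b_i)\|_{\Hcc}$, $|\bar k_R(b_i)-\bar k(b_i)|$ and $\|v_R-\gamma_{b_i}\|_{\Hcc}$. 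Inverting the uniformly stable $DG_R$ and minimising over $v_R\in\Hc^0_R$ yields the remaining three squared terms in the claimed bound.

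\textbf{Main obstacle.} The hard step is the quadratic residual estimate for the second bracket. A naive application of Galerkin theory to the extended system produces only a first-order bound; the quadratic improvement requires simultaneously exploiting both cancellations (from $\delta_u\E(\bar u_{b_i},\bar k_{b_i})=0$ and $\delta_{uu}^2\E(\bar u_{b_i},\bar k_{b_i})\gamma_{b_i}=0$) while keeping the eigenvector slot in the form of an arbitrary $v_R\in\Hc^0_R$ so that the best-approximation error appears with the correct exponent. Care is also needed to ensure that the linearity of $\hat u_k$ in $k$, which controls the way $k$ enters $\delta_{uk}^2\E$, does not spoil the squared estimates when expanding the nonlinear terms.
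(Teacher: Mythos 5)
First, note that the paper itself does not prove this lemma: it is imported, restated in the paper's notation, from \cite[Theorem 4.1]{cliffe_spence_tavener_2000}, so there is no in-paper argument to compare yours against line by line. Your first stage — regularising the fold through the Moore--Spence extended system, verifying that its Jacobian at $(\bar u_{b_i},\bar k_{b_i},\gamma_{b_i})$ is an isomorphism using \eqref{nondeg2}--\eqref{nondeg1} and Lemma \ref{abcd}, and then producing a nearby discrete fold via Theorem \ref{thm:brezzi1} with the uniform stability argument of Theorem \ref{approx-thm-1} — is sound and in the spirit of how the paper treats the analogous extended systems. Likewise the first bracket of your splitting is fine, modulo the (checkable) requirement that $\bar k_R''$ stays uniformly bounded away from zero near the fold.

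The gap is in the decisive second bracket. The quantity $\bar k_R(b_i)-\bar k(b_i)$ is a priori only first order ($O(R^{-1/2})$, by Theorem \ref{approx-thm-1}), and the mechanism you propose to upgrade it does not deliver a quadratic bound: the vanishing of $\delta_u\E$ and of $H_{b_i}\gamma_{b_i}$ at the exact fold only annihilates the \emph{zeroth-order} term $G(x^*)$ in your Taylor expansion of the extended system; the linear term $DG_R[x_R-x^*]$ does not cancel — it is precisely what you invert — and inverting a uniformly stable Jacobian returns first-order bounds on \emph{all} components, including the $k$-slot. (Moreover $(\bar u_R(b_i),\bar k_R(b_i))$ is not the discrete fold, so the second Moore--Spence slot carries an $O(R^{-1/2})$ residual of its own, e.g.\ from $\bigl[\delta^2_{uu}\E(\bar u_R(b_i),\bar k_R(b_i))-\delta^2_{uu}\E(\bar u_{b_i},\bar k_{b_i})\bigr]v_R$.) The actual source of the superconvergence, which is what the cited Theorem 4.1 exploits, is the gradient/self-adjoint structure: subtract the exact equilibrium equation from the Galerkin one at $s=b_i$, Taylor-expand about the exact fold, and test with an arbitrary $v_R\in\Hc^0_R$ approximating $\gamma_{b_i}$. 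Galerkin orthogonality makes the tested difference vanish; self-adjointness of $H_{b_i}$ together with $H_{b_i}\gamma_{b_i}=0$ lets you replace $\langle H_{b_i}[\bar u_R(b_i)-\bar u_{b_i}],v_R\rangle$ by $\langle H_{b_i}[\bar u_R(b_i)-\bar u_{b_i}],v_R-\gamma_{b_i}\rangle$, a product of two first-order errors; and \eqref{nondeg2} keeps the coefficient multiplying $\bar k_R(b_i)-\bar k(b_i)$ bounded away from zero. After Young's inequality and minimisation over $v_R$ this yields exactly the right-hand side of the lemma. Without this null-vector/orthogonality step your second bracket remains first order and the claimed bound does not follow from your outline.
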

To exploit the inequality from Lemma \ref{fplemma}, we adapt \cite[Theorem 2]{Brezzi1}, which is a follow-up result to Theorem \ref{thm:brezzi1} for derivatives.
\begin{lemma}\label{brezzider}
Assume the hypotheses of Theorem \ref{thm:brezzi1} and in addition that
\[
\sup_{x_0 \in U}\|D F(x_0,y(x_0){\cb)}\| \leq c_1.
\]
Then there exists a continuous function $K\,:\,\R_+ \to \R_+,$ which depends only on $c_0,c_1,L_1$ such that for all $x_0 \in U$ and all $x \in B(x_0,a)$  it holds that
\begin{align*}
\|Dg(x) - Dy(x_0)\| \leq K( \|Dy(x_0)\|) \Big( & \|x-x_0\| + \|F(x_0,y(x_0){\cb)}\| \\
									   & + \|DF(x_0,y(x_0){\cb)},Dy(x_0)\| \Big).
\end{align*}
\end{lemma}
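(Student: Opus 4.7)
The plan is to derive a formula for $Dg(x)$ by implicitly differentiating the identity $F(x,g(x))=0$ established in Theorem~\ref{thm:brezzi1}, then to compare it with the analogous (non-vanishing) quantity for $y(x_0)$ and estimate each piece of the resulting expression using the hypotheses (i)--(iii) together with the a priori bound \eqref{brezzi-ineq}. Concretely, differentiating $F(x,g(x))=0$ yields
\[
Dg(x) = -\bigl[D_yF(x,g(x))\bigr]^{-1}D_xF(x,g(x)),
\]
while introducing the chain--rule total derivative
\[
D\Phi(x_0) := D_xF(x_0,y(x_0)) + D_yF(x_0,y(x_0))\,Dy(x_0),
\]
which is exactly the quantity appearing in the third term of the right-hand side of the lemma, a simple algebraic rearrangement gives
\[
Dy(x_0) = -\bigl[D_yF(x_0,y(x_0))\bigr]^{-1}\bigl(D_xF(x_0,y(x_0)) - D\Phi(x_0)\bigr).
\]

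Subtracting these two expressions and inserting intermediate terms produces the decomposition
\begin{align*}
Dg(x) - Dy(x_0) &= -\bigl[D_yF(x,g(x))\bigr]^{-1}\bigl(D_xF(x,g(x))-D_xF(x_0,y(x_0))\bigr) \\
&\quad - \Bigl(\bigl[D_yF(x,g(x))\bigr]^{-1} - \bigl[D_yF(x_0,y(x_0))\bigr]^{-1}\Bigr)D_xF(x_0,y(x_0)) \\
&\quad - \bigl[D_yF(x_0,y(x_0))\bigr]^{-1}D\Phi(x_0).
\end{align*}
The first term is estimated by hypothesis (iii) combined with $\|[D_yF]^{-1}\|\le c_0$, giving an upper bound $c_0 L_1(\xi)\bigl(\|x-x_0\|+\|g(x)-y(x_0)\|\bigr)$. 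The second term is handled by the classical resolvent identity $A^{-1}-B^{-1}=A^{-1}(B-A)B^{-1}$ together with $\|D_xF(x_0,y(x_0))\|\le c_1$, producing a bound $c_0^2 c_1 L_1(\xi)\bigl(\|x-x_0\|+\|g(x)-y(x_0)\|\bigr)$. The third piece is trivially bounded by $c_0\|D\Phi(x_0)\|$. Finally, the a priori estimate \eqref{brezzi-ineq} converts $\|g(x)-y(x_0)\|$ into the desired $\|x-x_0\|+\|F(x_0,y(x_0))\|$, with a constant depending only on $c_0,c_1$.

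Assembling the three estimates produces a bound of precisely the shape claimed, in which the prefactor is a polynomial in $c_0$, $c_1$ and $L_1(a)$ (where $a$ is the radius from Theorem~\ref{thm:brezzi1}). The dependence of the prefactor on $\|Dy(x_0)\|$ allows one to absorb further terms that arise if one wishes to rewrite $\|D\Phi(x_0)\|$ back in terms of $\|DF\|$ and $\|Dy(x_0)\|$; in any case the continuous function $K$ can be made explicit from the above constants. The main technical obstacle is not conceptual but notational: one must track carefully that each of $D_xF$, $D_yF$ and $[D_yF]^{-1}$ is evaluated at the appropriate base point (one of $(x_0,y(x_0))$ or $(x,g(x))$) so that hypothesis (iii) can be applied with its uniform Lipschitz constant $L_1(\xi)$ on the ball guaranteed by \eqref{choiceofa}, and that the a priori estimate \eqref{brezzi-ineq} keeps all intermediate points inside this ball; beyond this bookkeeping the argument is routine.
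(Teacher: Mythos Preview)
The paper does not give its own proof of this lemma; it simply cites \cite[Theorem~2]{Brezzi1}. Your argument is the standard implicit-differentiation proof one finds in that reference, and the overall structure---decompose $Dg(x)-Dy(x_0)$ into three pieces via insertion of intermediate terms, estimate each using (i)--(iii), and then invoke \eqref{brezzi-ineq}---is correct.

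One small gap worth closing: in the first and second terms you use $\bigl\|[D_yF(x,g(x))]^{-1}\bigr\|\le c_0$, but hypothesis~(i) only gives this at the base point $(x_0,y(x_0))$. You need a Neumann-series perturbation step: since \eqref{brezzi-ineq} and hypothesis~(iii) together with the choice \eqref{choiceofa} of $a$ ensure $\|D_yF(x,g(x))-D_yF(x_0,y(x_0))\|\le \tfrac{1}{2c_0}$, one obtains $\bigl\|[D_yF(x,g(x))]^{-1}\bigr\|\le 2c_0$. This is implicit in the proof of Theorem~\ref{thm:brezzi1} itself, but since you invoke the bound explicitly it is worth stating. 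With that adjustment your constants become polynomials in $c_0,c_1,L_1(a)$ as you say; the explicit dependence of $K$ on $\|Dy(x_0)\|$ in the statement is then harmless (your bound is in fact uniform in $\|Dy(x_0)\|$, which is stronger).
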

\begin{proof}[Proof of Theorem \ref{approx-thm-2}]
		Lemma \ref{fplemma} implies that, for domain radius $R$ large enough,
		 we have exactly $M$ approximate fold points $b_j^R \to b_j$ as $R \to \infty$.
		 By assumption, $b_j \in (0, 1)$ and hence also $b_j^R \in (0,1)$. Arguing analogously as in the proof of Theorem \ref{approx-thm-1}, it is not difficult to show that $F$ defined in \eqref{Fdef} satisfies the conditions of Lemma \ref{brezzider}, thus
\[
\left\| \bar{u}_R'(b_i) - \bar{u}'(b_i)\right\| + \left| \bar{k}_R'(b_i) -\bar{k}'(b_i) \right| \lesssim R^{-1/2+\beta},
\]
for arbitrary small $\beta > 0$. Furthermore,
\[
\inf_{v \in \Hc_0^R}
	\|v - \gamma_{b_i}\|_{\Hcc} \leq \|T_R\gamma_{b_i} - \gamma_{b_i}\| \lesssim R^{-1/2+\beta},
\]
with the first inequality following from the obvious fact that $T_R\gamma_{b_i} \in {\cb \Hc^0_R}$ and the second from the regularity result for $\gamma_{b_i}$ in Theorem \ref{thm:us}.

Applying Lemma~\ref{fplemma} we therefore obtain the desired result that
\[
|\bar{k}_R(b^R_i) - \bar{k}(b_i)| \lesssim R^{-1+\beta}.
\qedhere
\]
\end{proof}

\bibliographystyle{myalpha} 
\bibliography{papers}
\end{document}